\tikzset{node distance=2cm, auto}
\newcommand{\conj}[1]{\quad\textnormal{ #1 }\quad}
\newcommand{\RHom}[0]{\ensuremath{\RR\!\Hom}}
\newcommand{\inp}[1]{\ensuremath{\langle #1 \rangle}}
\newcommand{\module}[0]{\operatorname{-mod}}
\newcommand{\normaltext}[1]{\textnormal{#1}}
\def\imod#1{\allowbreak\mkern2.5mu({\operator@font mod}\,#1)}
\renewcommand{\a}{\alpha}
\renewcommand{\b}{\beta}
\newcommand{\opp}{\oplus}
\newcommand{\ott}{\otimes}
\newcommand{\ga}{\gamma}
\newcommand{\aA}{\mathcal{A}}
\newcommand{\aC}{\mathcal{C}}
\newcommand{\aD}{\mathcal{D}}
\newcommand{\aE}{\mathcal{E}}
\newcommand{\aF}{\mathcal{F}}
\newcommand{\aS}{\mathcal{S}}
\newcommand{\aW}{\mathcal{W}}
\newcommand{\rA}{\mathrm{A}}
\newcommand{\LL}{\mathbb{L}}
\newcommand{\RR}{\mathbb{R}}
\newcommand{\ZZ}{\mathbb{Z}}
\newcommand{\eC}{\EuScript{C}}
\newcommand{\eD}{\EuScript{D}}
\newcommand{\eE}{\EuScript{E}}
\newcommand{\eY}{\EuScript{Y}}
\theoremstyle{plain}
\newtheorem{thm}[subsection]{Theorem}
\newtheorem{theorem}[subsection]{Theorem}
\newtheorem{prop}[subsection]{Proposition}
\theoremstyle{remark}
\newtheorem{rmk}[subsection]{Remark}
\newtheorem{remark}[subsection]{Remark}
\theoremstyle{definition}
\newtheorem{example}[subsection]{Example}
\newtheorem{defn}[subsection]{Definition}
\numberwithin{equation}{section}
\def\imod#1{\allowbreak\mkern2.5mu({\operator@font mod}\,#1)}
\newcommand{\PA}{\Pi}
\newcommand{\Tri}{\Delta}
\newcommand{\osym}{\kappa}
\newcommand{\Ainf}{A_\infty}
\newcommand{\AAinf}{\aA_\infty}
\newcommand{\vnp}[1]{\lvert #1 \rvert}
\newcommand{\xto}[1]{\xrightarrow{#1}}
\newcommand{\dgcat}{\normaltext{dgcat}_k}
\newcommand{\Hqe}{\normaltext{Hqe}}
\theoremstyle{plain}
\newcommand{\Vect}{Vect}
\newcommand{\Ho}{H^0}
\newcommand{\CHo}{Ho}
\newcommand{\End}{End}
\newcommand{\Obj}{Ob}
\newcommand{\Ob}{\Obj}
\newcommand{\Hom}{Hom}
\newcommand{\Ext}{Ext}
\newcommand{\Ch}{Ch}
\newcommand{\proj}{P}
\newcommand{\tproj}{\proj} 
\newcommand{\simp}{S}
\newcommand{\tsimp}{Q}
\newcommand{\tM}{\tilde{M}}
\begin{document}

\title[Preprojective Cones]{Preprojective analogue of the cone construction}
\author[Benjamin Cooper]{Benjamin Cooper}
\address{University of Iowa, Department of Mathematics, 14 MacLean Hall, Iowa City, IA 52242-1419}
\email{ben-cooper\char 64 uiowa.edu}
\author[Joshua Sussan]{Joshua Sussan}
\address{CUNY Medgar Evers, Department of Mathematics, 1650 Bedford Ave, Brooklyn, NY 11225}
\email{jsussan\char 64 mec.cuny.edu}

\begin{abstract}
  We formulate a relative, representation theoretic, notion of the algebraic
  cone construction. This motivates a generalization of the cone corresponding
  to a preprojective algebra.
\end{abstract}

\maketitle
\setcounter{tocdepth}{1}
\setcounter{secnumdepth}{2}

\section{Introduction}\label{introsec}
We give a a relative, representation theoretic, reformulation of the
algebraic cone construction. While variations have appeared in the
literature before \cite{KLH, Seidel, Soibelman} the version introduced here
is used to motivate a generalization of the cone construction obtained by
replacing the type $A$ quiver with its associated preprojective algebra.
This corresponds to the passage of a cone corresponding to a moduli space of
representations to one corresponding to a certain canonical bundle over the
moduli space, which is of importance in geometric representation theory
\cite[Thm. C]{Ringel}.

If $f : X \to Y$ is a chain map then the algebraic cone of $f$ is the chain complex 
$C(f) = X[1] \opp Y$ given by the direct sum of $X$ with $Y$ and differential
$$d_{C(f)} = \left(\begin{array}{cc} -d_X & 0 \\ f & d_Y \end{array} \right).$$
This familiar gadget is central to many important concepts in
mathematics.  In their study of differential graded algebras, Bondal and
Kapranov formulated the requirement that a map contain a cone in terms of
the completeness of the Yoneda embedding \cite{BK}. In creating the
foundations of $\Ainf$-categories, Kontsevich and Seidel found certain
$\Ainf$-categories $\Delta$ which corepresent distinguished triangles, in
particular these categories characterize algebraic cones up to homotopy \cite{KLH, Seidel}.
In this paper, we introduce a generalization of the criterion that a category
contains cones and distinguished triangles by combining these two ideas.

The remainder of the paper investigates this definition when the
$A_2$-quiver appearing in the Kontsevich-Seidel category $\Delta$ (see \S \ref{qrepsec}) is replaced by the preprojective algebra $\Pi Q$  of the $A_2$-quiver.
From a certain perspective, the preprojective algebra $\Pi Q$ is the simplest
non-trivial replacement for a quiver $Q$. In part, this is because $Q$
naturally embeds in $\Pi Q$, and as $Q$-module, $\Pi Q$ decomposes as a direct
sum of preprojective $Q$-modules. The moduli space of $\Pi Q$ finite
dimensional representations naturally fibers over the corresponding moduli
space of $Q$ representations and this relationship has important
consequences in representation theory.  For more details see \cite{Ringel}.

In order to understand the preprojective cone, in
Thm. \ref{bigcomputationtheorem} we use homotopy perturbation theory to compute
the minimal model $\Pi$ of the dg enhancement of  the $\Ext$-algebra
of indecomposable objects in the category $\Pi A_2\module$ where
$$\Pi = \Ext^*_{\Pi A_2}(I,I) \quad \quad  I = \bigoplus_{} M$$
and the direct sum is over all indecomposable $\Pi A_2$-modules (up to isomorphism).
This results in a complete description of the $\Ainf$-structure on a
tetrahedron with vertices labeled by indecomposable $\Pi A_2$-modules.
This $\Ainf$-structure allows us to study the derived mapping spaces
$\RHom(\Pi,\aD)$ as $\aD$ ranges over other $\Ainf$-categories. 
Since preprojective distinguished triangles are homotopy classes of such functors 
$[\tilde{F}]\in \Ob(\Ho(\RHom(\Pi,\aD)))$,
we are able to formulate conclusions about them from the minimal model for $\Pi$.

In short, a preprojective map is a pair of cycles
$$f : A \leftrightarrows B : g$$
and the preprojective distinguished triangle is constructed from the algebraic cones $C(f)$ and  $C(g)$ together with canonical maps between them
$$(12) : C(f) \leftrightarrows C(g) : (21).$$
In addition to the ordinary distinguished triangles which are determined
individually by $f$ and $g$, the preprojective distinguished triangle
contains families of Postnikov systems of arbitrary length. The
sense in which this data is precisely encoded by the $\Ainf$-structure
computed in Thm. \ref{bigcomputationtheorem} is discussed in Rmk. \ref{postnikovrmk}.

Finally, these observations are related to Fukaya categories of surfaces. In
Proposition \ref{aaekorelprop}, we conclude by constructing a strict
$\Ainf$-functor from the wrapped Fukaya category of a pair of paints
$\aW(P)$ to a slight modification of the preprojective category $\Pi$.

This paper is part of an ongoing study of the relativization of structures
in homological algebra and relationships to the categorification program in
low dimensional topology. It has been made into a separate paper because it
contains an explicit and lengthy computation of a certain $\Ainf$-structure.

\subsection*{Acknowledgments}
B.C. was supported by the University of Iowa Old Gold fellowship. J.S. was supported by NSF grant DMS-1407394, PSC-CUNY Award 67144-0045, and Simons Foundation Collaboration Grant 516673.

\section{The language of $A_\infty$-categories}\label{ainfsec}
An $A_\infty$-category is a category in which the associativity of
composition holds only up to coherent homotopy.  The purpose of this section
is to explain how the homotopy transfer theorem \cite{Markl} implies that
every differential graded category $\eC$ uniquely determines an
$A_\infty$-category $\Ho(\eC)$ and how homotopy classes of maps between dg
categories can be computed from this $A_\infty$-category \cite{Faonte}.

\subsection{$\Ainf$-categories and $\Ainf$-functors}\label{ainfstddefsec}

\begin{defn}\label{ainfdef}
An {\em $A_{\infty}$-category} $\aC$ consists of a collection of objects $\Ob(\aC)$ and a $\ZZ$-graded $k$-module of morphisms $\Hom(X,Y)=\bigoplus_{i \in \ZZ} \Hom^i(X,Y)$ for each pair of objects $X,Y\in \Ob(\aC)$ together with maps
$$m_d : \Hom(X_{d-1},X_{d})\ott  \cdots \ott\Hom(X_0,X_1)\to \Hom(X_0,X_d)[2-d],\quad d\geq 1$$
which satisfy the relations
\begin{equation}\label{ainfrel}
\sum_{l=0}^d \sum_{n=0}^{d-l} (-1)^{\ddagger_n} m_{d-l+1}(f_d,\ldots,f_{n+l+1},m_l(f_{n+l},\ldots,f_{n+1}),f_n,\ldots,f_1) = 0
\end{equation}
where $\ddagger_n = \vnp{f_n} + \cdots + \vnp{f_1}-n$ and $d \geq 1$.

An $A_\infty$-category $\aC$ is said to be {\em strictly unital} when there is a unique degree zero morphism $1_X \in \Hom^0(X,X)$ for each $X\in \Ob(\aC)$ which satisfies
\begin{align}\label{identityeqn}
m_2(f,1_X) = f, \quad\quad (-1)^{\vnp{g}}m_2(1_X,g) = g\\ \conj{ and }m_d(\ldots, 1_X, \ldots) = 0 \conj{ when } d \ne 2,\nonumber
\end{align}
for any maps $f : X \to A$ or $g : B \to X$ and any object $X\in\Ob(\aC)$.
\end{defn}  

\begin{example}\label{truncex}
Additive $k$-linear categories and differential graded categories are
examples of $\Ainf$-categories in which all of the higher
multiplications $m_d $, for $d > 2$, vanish.  Any $\Ainf$-category $\aC$ determines a dg category $\tau_{>2}\aC$ which is obtained by forgetting the maps $m_d$ for $d>2$.
Non-trivial examples of $\Ainf$-categories appear in Section \ref{preprojalgsec}. 
\end{example}

In an $A_\infty$-category $\aC$, there is a degree $1$ map, $m_1 : \Hom(X_0,X_1)\to \Hom(X_0,X_1)$, for each pair of objects $X_0,X_1\in \Ob(\aC)$, which satisfies $m_1\circ m_1 = 0$; the simplest $A_\infty$-relation above. Taking homology everywhere with respect to these maps produces the homotopy category defined below.

\begin{defn}\label{hodef}
The {\em homotopy category} $\Ho(\aC)$ of an $A_\infty$-category $\aC$ is the $k$-linear category with the same objects as $\aC$ and morphisms given by homology classes of maps $[f] \in H^*(\Hom(X,Y),m_1)$ for each $X,Y\in \Ob(\aC)$. The composition is defined by
$$[f_2]\circ [f_1] = (-1)^{\vnp{f_1}} [m_2(f_2,f_1)].$$
\end{defn}

\begin{example}\label{endex}
  Suppose that $R$ is ring and $M$ is an $R$-module. If $P$ is a projective resolution of $M$
$$ P = [\cdots \to P_i \xto{d_{i}} P_{i+1} \to \cdots \to P_{-1} \to P_0] \to M \to 0$$
then the endomorphisms $\End^*(P)$ of $P$ form a differential graded category with one object. 

In more detail, set $\End^n(P) = \prod_{i\in \ZZ} \Hom(P_i,P_{i+n})$. If $f = \{ f_i : P_i \to P_{i+n} \}$ and $g = \{ g_i : P_i \to P_{i+m} \}$ are maps of degree $n$ and $m$ respectively then the composite $g\circ f = \{ g_{n+i} \circ f_i : P_i \to P_{i+n+m}\}$ is an endomorphism of degree $n+m$.  This composition determines a multiplication $\mu$.  If $f = \{f_i : P_i \to P_{i+n} \}$ then $df = \{ (df)_i : P_i \to P_{i+n+1} \}$ where
$$(df)_{i} = d_{i+n} \circ f_{i} - (-1)^n f_{i+1} d_{i} : P_{i} \to P_{i+n+1}.$$

The homotopy category $\Ho(\End^*(P))$ of $\End^*(P)$ is the $\Ext$-algebra of $M$
$$\Ho(\End^*(P)) \cong \Ext^*_R(M).$$
\end{example}

\begin{defn}\label{functordef}
  An {\em $\Ainf$-functor} $F : \aC \to \aD$ between $A_\infty$-categories consists of a map $F : \Ob(\aC) \to \Ob(\aD)$ and multilinear maps 
$$F^d : \Hom_\aC(X_{d-1}, X_d) \ott \cdots \ott \Hom_\aC(X_0,X_1) \to \Hom_{\aD}(F(X_0), F(X_d))[1-d]$$
for $d\geq 1$, which satisfy the equations
\begin{align*}
\sum_{r\geq 1} \sum_{s_1,\ldots,s_r} m_r^{\aD}(F^{s_r}(f_d,\ldots,f_{d-s_r + 1}), \ldots, F^{s_1}(f_{s_1},\ldots, f_1)) = \\\sum_{l,n} (-1)^{\ddagger_n} F^{d-l+1}(f_d,\ldots, f_{n+l+1}, m^\aC_{l}(f_{n+l},\ldots, f_{n+1}),f_n,\ldots,f_1).
\end{align*}
The sign $\ddagger_n$ is as in Definition \ref{ainfdef} and the first sum is over all partitions: $s_1+\cdots +s_r = d$. The collection $\{F^d\}$ is also required to behave well with respect to units
$$F^1(1_X) = 1_{F(X)}\conj{ and } F^d(\ldots, 1_X, \ldots) = 0 \textnormal{ for } d\geq 2.$$

Any such $A_{\infty}$-functor $F : \aC \to \aD$ induces a map $\Ho(F) : \Ho(\aC) \to \Ho(\aD)$ between the associated homotopy categories. An $A_{\infty}$-functor is a {\em quasi-isomorphism} or {\em $\Ainf$-equivalence} when $\Ho(F)$ is an equivalence of categories.
\end{defn}

If $F : \aC \to \aD$ is an $\Ainf$-functor then there is a dg functor $\tau_{>2}(F) : \tau_{>2}\aC \to \tau_{>2}\aD$ determined by the map $F^1$ between the truncations of $\aC$ and $\aD$, see Example \ref{truncex}.

Two $\Ainf$-functors $F : \aC \to \aD$ and $G : \aD \to \aE$ can be composed to produce an $\Ainf$-functor $G \circ F : \aC \to \aE$, the $d$th component of which is given by the equation
$$(G\circ F)^d(f_d,\ldots,f_1) = \sum_{r} \sum_{s_1,\ldots,s_r} G^r(F^{s_r}(f_d,\ldots, f_{d-s_r+1}),\ldots, F^{s_1}(f_{s_1},\ldots, f_1)).$$

\begin{defn}\label{aainfdef}
Let $\AAinf(\aC,\aD)$ denote the $\Ainf$-category of $\Ainf$-functors from $\aC$ to $\aD$. If $F,G : \aC \to \aD $ are two objects in this category, then a morphism ({\em pre-natural transformation}) $T \in \Hom^g(F,G)$ of degree $g$ from $F$ to $G$ is a sequence $T = (T^0, T^1, \ldots)$ where
  $$T^d : \Hom_{\aC}(X_{d-1},X_d) \otimes \cdots \otimes \Hom_{\aC}(X_0,X_1) \to \Hom_{\aD}(F(X_{0}),G(X_d))[g-d],$$
  for all sequences of objects $(X_0,\ldots,X_d)$ in $\aC$.
There is an $\Ainf$-structure on the collection of pre-natural transformations. For more details see ~\cite[\S (1d)]{Seidel}.
\end{defn}

\subsection{Homotopy perturbation theory}\label{HPTsec}
Since an $\Ainf$-category is a category in which the
composition is associative only up to coherent homotopy, deforming an
$\Ainf$-category by a homotopy yields an $\Ainf$-equivalent
$\Ainf$-category. The purpose of the homotopy transfer theorem is to make
this precise, see \cite[Prop. 1.12]{Seidel} or \cite{Markl} for detailed
arguments and signs.


If $\aC$ and $\aD$ are dg categories with the same set of objects, then we
say that $\aD$ is a {\em perturbation} of $\aC$ when there are dg functors
$f : \aC \to \aD$ and $g : \aD \to \aC$ so that $f$ and $g$ are identity
maps on objects, $1_{\aD} = fg$, and for each pair of objects $x,y\in \aC$
there is a homotopy $h_{x,y} : \Hom_{\aC}(x,y) \rightarrow \Hom_{\aC}(x,y) $ of
degree $-1$ which satisfies $d h_{x,y} - h_{x,y} d = 1 - gf$ where $gf$ is
the map induced by $g$ and $f$ between $\Hom$-spaces.

\begin{thm}\label{HPthm}
  Suppose that $\aC$ is an $\Ainf$-category, $\tau_{>2}\aC$ is the dg
  category determined by forgetting the higher $m_n$-maps when $n \geq 3$ and
  $\aD$ is a perturbation of $\tau_{>2}\aC$ with dg functors
    $$f : \tau_{>2}\aC \rightarrow \aD\conj{ and } g : \aD \rightarrow \tau_{>2} \aC.$$
Then there is an $\Ainf$-category $(\aD',\{m_n^{\aD'}\}_{n>3})$ and there are $\Ainf$-functors 
$$f' : \aC \to \aD' \conj{ and } g' : \aD' \to \aC$$
which determine an $\Ainf$-equivalence  $\aC\simeq \aD'$ and restrict to the initial data:
$\tau_{>2}\aD' = \aD$, $\tau_{>2}(f') = f$ and $\tau_{>2}(g') = g$.
\end{thm}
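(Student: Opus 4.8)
The plan is to deduce this from homotopy transfer --- the homological perturbation lemma applied to bar constructions --- together with a little bookkeeping showing that the truncated data survives on the nose. Write $B\aC$ for the reduced tensor coalgebra $\bigoplus_{n\geq 1}\aC[1]^{\otimes n}$ with codifferential $D^\aC=\sum_{k\geq 1}D_k$, where $D_k$ is the coderivation whose linear part is $m_k$, so that $D_k$ lowers word length by $k-1$. Since $B\aC$ and $B(\tau_{>2}\aC)$ have the same underlying coalgebra, the higher products assemble into a perturbation: $D^\aC=D_1+\Delta$ with $\Delta=\sum_{k\geq 2}D_k$ a coderivation strictly lowering word length, hence nilpotent on each summand $\aC[1]^{\otimes n}$ --- the only convergence we shall use. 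Likewise $B\aD$ carries the codifferential $D_1^\aD+D_2^\aD$, with no further terms since $\aD$ is a dg category.

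First I would lift the deformation retract data to the bar constructions, but only with respect to the internal differentials $D_1$, $D_1^\aD$. The dg functors $f,g$ induce strict coalgebra maps $\hat f,\hat g$ (apply $f$, resp.\ $g$, to each tensor factor), which commute with $D_1$, $D_1^\aD$ and satisfy $\hat f\hat g=1$ because $fg=1_\aD$. After replacing $h$ by a homotopy obeying the standard side conditions $h^2=0$, $fh=0$, $hg=0$ (always arrangeable), the tensor trick produces a word-length preserving degree $-1$ map $\hat h$ on $\bigoplus_n\aC[1]^{\otimes n}$ with $D_1\hat h+\hat h D_1=1-\hat g\hat f$ and inherited side conditions. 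One cannot lift $h$ to a homotopy compatible with $D_1+D_2$, since $h$ is in no way a derivation for $m_2$; that is exactly why $D_2$ is thrown into the perturbation rather than kept with the differential.

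Next I apply the homological perturbation lemma to the contraction $\bigl(\bigoplus_n\aC[1]^{\otimes n},D_1\bigr)\leftrightarrows\bigl(\bigoplus_n\aD[1]^{\otimes n},D_1^\aD\bigr)$ with perturbation $\Delta$; for the signs, and for the essential fact that in this cofree setting the outputs of the lemma are again coderivations and coalgebra morphisms, I would cite \cite[Prop.\ 1.12]{Seidel} and \cite{Markl}. This produces a codifferential $D_1^\aD+\Delta'$ on $\bigoplus_n\aD[1]^{\otimes n}$, where $\Delta'=\sum_{j\geq 1}\hat f\,\Delta(\hat h\Delta)^{j-1}\hat g$, together with coalgebra maps $\hat f'=\hat f\sum_{j\geq 0}(\Delta\hat h)^{j}$ and $\hat g'=\sum_{j\geq 0}(\hat h\Delta)^{j}\hat g$ and a homotopy $\hat h'$ with $\hat f'\hat g'=1$ and $\hat g'\hat f'-1=D^\aC\hat h'+\hat h'D^\aC$. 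A codifferential on a cofree conilpotent coalgebra is the bar differential of an $\Ainf$-category: this is our $\aD'$, and $\hat f',\hat g'$ are the bar constructions of $\Ainf$-functors $f':\aC\to\aD'$, $g':\aD'\to\aC$. Since $\hat f'\hat g'=1$ we get $f'\circ g'=\Id_{\aD'}$ on the nose, while $\hat g'\hat f'-1$ is a coboundary, so $g'\circ f'$ is $\Ainf$-homotopic to $\Id_\aC$; hence $f'$, $g'$ are inverse $\Ainf$-equivalences and $\aC\simeq\aD'$.

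It remains to check that the passage to $\aD'$ left the truncation unchanged --- this is where the hypothesis $fg=1_\aD$ is used, and it is the point I expect to require the most care to state precisely. As $D_k$ has arity $k$ while $\hat f,\hat g,\hat h$ all have arity one, the $j$-th summand of $\Delta'$ has arity $\geq 1+j$; so $\Delta'$ has no arity-one part, and its arity-two part is exactly $\hat f D_2^\aC\hat g$, which equals $D_2^\aD$ because $\hat f,\hat g$ are the strict functor-induced maps and $$\hat f D_2^\aC\hat g(w_1\otimes w_2)=f\bigl(m_2^\aC(gw_1,gw_2)\bigr)=m_2^\aD(fgw_1,fgw_2)=m_2^\aD(w_1\otimes w_2).$$ Thus $D_1^\aD+\Delta'$ agrees with $D_1^\aD+D_2^\aD$ in arities $\leq 2$, i.e.\ $\tau_{>2}\aD'=\aD$ on the nose, the higher operations $m_n^{\aD'}$ ($n\geq 3$) being what $\Delta'$ contributes in arity $\geq 3$. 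Likewise $(f')^1$, $(g')^1$ are the arity-one parts of $\hat f'$, $\hat g'$ (every $\Delta$ raises arity), namely $f$ and $g$, and the side conditions annihilate the arity-two parts: $(f')^2$ is, up to sign, a sum of terms $f\,m_2^\aC(\ldots,hv,\ldots)$, each zero since $f$ is a functor and $fh=0$, while $(g')^2(w_1\otimes w_2)=\pm h\,m_2^\aC(gw_1,gw_2)=\pm h g\,m_2^\aD(w_1,w_2)=0$ since $hg=0$. Hence $\tau_{>2}(f')=f$ and $\tau_{>2}(g')=g$. The remaining genuinely technical input --- which I would import from \cite{Markl,Seidel} rather than reprove --- is that the perturbation lemma here really does output a coderivation and coalgebra morphisms, so that the transferred data assembles into an $\Ainf$-category with honest $\Ainf$-functors and not merely a loose collection of chain maps; the rest is the combinatorics above.
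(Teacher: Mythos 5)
The paper does not actually prove Theorem \ref{HPthm}; it defers to \cite[Prop.~1.12]{Seidel} and \cite{Markl} "for detailed arguments and signs," so there is no in-paper argument to compare against. Your proposal is a correct, self-contained write-up of the standard proof that those references supply: pass to bar constructions, treat $\sum_{k\geq 2}D_k$ as a perturbation of the internal differential of the tensor-trick contraction (local nilpotence by word length gives convergence), and invoke the perturbation lemma. The two points that genuinely require care are both handled properly: (i) that the perturbed codifferential and morphisms are again a coderivation and coalgebra morphisms on the cofree conilpotent coalgebra --- you rightly flag this as the one technical input to import from \cite{Markl, Seidel} rather than a formal consequence of the bare perturbation lemma; and (ii) the arity bookkeeping showing that $\tau_{>2}\aD'=\aD$ and $\tau_{>2}(f')=f$, $\tau_{>2}(g')=g$ hold on the nose, which correctly isolates where the hypotheses $fg=1_{\aD}$, the multiplicativity of the dg functors $f,g$, and the side conditions $fh=0$, $hg=0$ enter (the side conditions are indeed always arrangeable by the usual replacement of $h$). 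This last clause is the part of the statement specific to the paper's formulation, and your computation of the arity-$\leq 2$ components of $\Delta'$, $\hat f'$, $\hat g'$ establishes it.
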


Since $m_1^{\aD} = m_1^{\aD'}$ and $m_2^{\aD} = m_2^{\aD'}$, one can view $\aD'$ as an extension of the dg structure on $\aD$.


An important special case of this theorem shows that every $\Ainf$-category
$\aC$ is $\Ainf$-equivalent to its own homotopy category $\Ho(\aC)$,
\cite[Rmk 1.13]{Seidel}.  Following Example \ref{endex}, when $P$ is a
projective resolution of an $R$-module $M$, there is an $\Ainf$-structure on
the $\Ext$-algebra $\Ext^*(M)$ making it $\Ainf$-equivalent to the dg
algebra $\End^*(P)$ (with differential $d$ and multiplication $\mu$)
$$\End^*(P) \cong \Ext^*(M).$$

For this special case, following \cite[Theorem 3.2]{Keller}, the equivalence of Theorem \ref{HPthm} may be expressed using maps 
\begin{equation}\label{perturbdiag}
\begin{tikzpicture}[scale=10, node distance=2cm]
\node (A)  {$\End^*(P)$};
\node (B) [right of=A] {};
\node (C) [right of=B] {$\Ext^*(M)$};
\draw[transform canvas={yshift=+0.5ex},->] (A) to node {$p$} (C);
\draw[transform canvas={yshift=-0.5ex},->] (C) to node {$i$} (A);
\draw[->] (A) to [out=155,in=205,looseness=7] node [swap] {$H$} (A);
\end{tikzpicture}
\end{equation}
where $i$ and $p$ are morphisms of degree zero and $H$ is a homogeneous map of degree $-1$ such that
$$pi=1, \quad 1-ip=d(H), \quad H^2=0.$$
Then the $\Ainf$-structure on $\Ext^*(M)$ is given by
$$ m_n = \sum_T m_n^T $$
where $T$ ranges over planar rooted binary trees with $n$ leaves and
$m_n^T$ is given by composing the tree-shaped diagram obtained by labeling each leaf by $i$, each branch point by $\mu$, each internal edge by $H$ and the root by $p$.

For example, the two trees determining $m_3$ are pictured below.

\begin{equation}
\label{trees}
\begin{tikzpicture}[scale=10, node distance=1.5cm]
\node (A1)  {$i$};
\node (A2) [right of=A1] {};
\node (A3) [right of=A2] {$i$};
\node (A4) [right of=A3] {$i$};
\node (B2) [below of=A2] {$\mu$};
\node (B3) [below of=A3] {};
\node (B4) [below of=A4] {};
\node (C3) [below of=B3] {$\mu$};
\node (D3) [below of=C3] {$p$};
\draw[] (A1) to node {} (B2);
\draw[] (A3) to node {} (B2);
\draw[] (B2) to node {$H$} (C3);
\draw[] (A4) to node {} (C3);
\draw[] (C3) to node {} (D3);
\end{tikzpicture}
\begin{tikzpicture}[scale=10, node distance=1.5cm]
\node (A1)  {};
\node (A2) [right of=A1] {$i$};
\node (A3) [right of=A2] {$i$};
\node (A4) [right of=A3] {};
\node (A5) [right of=A4] {$i$};
\node (B4) [below of=A4] {$\mu$};
\node (B3) [below of=A3] {};
\node (B4) [below of=A4] {};
\node (C3) [below of=B3] {$\mu$};
\node (D3) [below of=C3] {$p$};
\draw[] (A3) to node {} (B4);
\draw[] (A2) to node {} (C3);
\draw[] (B4) to node {$H$} (C3);
\draw[] (A5) to node {} (B4);
\draw[] (C3) to node {} (D3);
\end{tikzpicture}
\end{equation}

\subsection{Homotopy classes of functors}
The category of differential graded categories $\dgcat$ over $k$ can be given the structure of a model category $\Hqe$. A weak equivalence in this homotopy theory is a dg functor $f : \eC \to \eD$ which satisfies two properties
\begin{enumerate}
\item $\Ho(f) : \Ho(\aC) \to \Ho(\aD)$ is essentially surjective
\item $f_{x,y} : \Hom_{\aC}(x,y) \to \Hom_{\aD}(f(x), f(y))$ is a quasi-isomorphism for all $x,y\in\Ob(\aC)$.
\end{enumerate}
The homotopy category $\CHo(\dgcat)$ of dg categories is the category obtained by formally inverting the weak equivalences above. To\"en \cite{Toen} proved that category $\CHo(\dgcat)$ is closed and monoidal. In particular, there are dg categories $\aC \ott^\LL \aD$ and $\RHom(\aD,\aE)$ together with natural isomorphisms
$$\Hom(\aC \ott^\LL \aD, \aE) \xto{\sim} \Hom(\aC, \RHom(\aD,\aE)).$$
Unfortunately, To\"{e}n's description of the dg category $\RHom(\eD,\eE)$ is somewhat complicated. The purpose of this section is to discuss an alternative way to compute the derived space of mappings using $\Ainf$-categories. G. Faonte proved the theorem below \cite[Thm. 1.7]{Faonte}; the statement is sometimes attributed to Kontsevich.

\begin{thm} \label{aainfthm}
If $\aD$ and $\aE$ are dg categories over a field of characteristic $0$, then the derived mapping category $\RHom(\aD,\aE)$ is naturally isomorphic to the dg category of $\Ainf$-functors from $\aD$ to $\aE$
$$\RHom(\aD,\aE) \xto{\sim} \AAinf(\aD,\aE).$$
\end{thm}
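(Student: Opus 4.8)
The plan is to prove Theorem \ref{aainfthm} by relating $\AAinf(\aD,\aE)$ to To\"en's bimodule model for the internal mapping object of $\CHo(\dgcat)$. Since $k$ is a field every dg category is $k$-flat, so $\aC\ott^\LL\aD=\aC\ott_k\aD$ throughout, and \cite{Toen} represents $\RHom(\aD,\aE)$ by the dg category $\mathrm{rep}(\aD,\aE)$ of \emph{right quasi-representable} cofibrant $\aD$-$\aE$-bimodules: cofibrant dg functors $X\colon\aD\ott_k\aE^{\op}\to\mathrm{Ch}_k$ (with $\mathrm{Ch}_k$ the dg category of chain complexes of $k$-modules) such that, for each $d\in\Ob(\aD)$, the right $\aE$-module $X(d,-)$ is quasi-isomorphic to $\Hom_\aE(-,e)$ for some $e\in\Ob(\aE)$. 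It therefore suffices to construct a zig-zag of quasi-equivalences between $\AAinf(\aD,\aE)$ and $\mathrm{rep}(\aD,\aE)$ that is natural in $\aD$ and $\aE$; such a zig-zag delivers the asserted natural isomorphism in $\CHo(\dgcat)$.

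First I would bring in the dg category $\mathrm{Mod}_\infty(\aE)$ of strictly unital $A_\infty$-modules over $\aE$ --- that is, $A_\infty$-functors $\aE^{\op}\to\mathrm{Ch}_k$ --- and record two inputs. (i) \emph{$A_\infty$-Yoneda}: the assignment $e\mapsto\Hom_\aE(-,e)$ upgrades to a strictly unital $A_\infty$-functor $\aE\to\mathrm{Mod}_\infty(\aE)$ that is cohomologically full and faithful, with essential image --- up to quasi-isomorphism --- the full dg subcategory $\mathrm{Mod}_\infty^{qr}(\aE)$ of quasi-representable modules; hence $\aE\simeq\mathrm{Mod}_\infty^{qr}(\aE)$, and so $\AAinf(\aD,\aE)\simeq\AAinf(\aD,\mathrm{Mod}_\infty^{qr}(\aE))$. (ii) \emph{Rectification}: for every dg category $\aA$, the inclusion $\mathrm{Mod}(\aA)\hookrightarrow\mathrm{Mod}_\infty(\aA)$ of dg modules into $A_\infty$-modules is a quasi-equivalence, a homotopy inverse being built from the bar resolution; this is a direct application of the homotopy transfer theorem (Theorem \ref{HPthm}), and it is here that the characteristic-zero hypothesis enters, in order to keep the transferred data and the comparison functorial. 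Input (i) is the $A_\infty$ form of the Yoneda lemma as in \cite[\S 1]{Seidel}; input (ii) is standard, compare \cite{Keller}.

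Next I would curry. Because $\aD$ and $\aE$ are honest dg categories and the target is $\mathrm{Ch}_k$, currying identifies $\AAinf(\aD,\mathrm{Mod}_\infty(\aE))$ with the dg category $\AAinf(\aD\ott_k\aE^{\op},\mathrm{Ch}_k)=\mathrm{Bimod}_\infty(\aD,\aE)$ of $A_\infty$-bimodules over the pair $(\aD,\aE)$; under this identification an $A_\infty$-functor with values in $\mathrm{Mod}_\infty^{qr}(\aE)$ corresponds precisely to a right quasi-representable $A_\infty$-bimodule, so $\AAinf(\aD,\mathrm{Mod}_\infty^{qr}(\aE))\cong\mathrm{Bimod}_\infty^{rqr}(\aD,\aE)$. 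Applying input (ii) with $\aA=\aD\ott_k\aE^{\op}$ --- quasi-representability being a homotopy-invariant condition --- gives a quasi-equivalence $\mathrm{Bimod}_\infty^{rqr}(\aD,\aE)\simeq\mathrm{rep}(\aD,\aE)$ after a cofibrant replacement inside $\mathrm{Mod}(\aD\ott_k\aE^{\op})$ is inserted to meet To\"en's cofibrancy condition. Splicing the chain
$$\AAinf(\aD,\aE)\ \simeq\ \AAinf(\aD,\mathrm{Mod}_\infty^{qr}(\aE))\ \cong\ \mathrm{Bimod}_\infty^{rqr}(\aD,\aE)\ \simeq\ \mathrm{rep}(\aD,\aE)\ \simeq\ \RHom(\aD,\aE)$$
and checking that every arrow is natural in $\aD$ and $\aE$ yields the theorem.

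The main obstacle is the interaction of inputs (i) and (ii): one must produce the $A_\infty$-Yoneda functor with all its higher components and the Koszul signs of \eqref{ainfrel}, verify strict unitality and the cohomological equivalence onto the quasi-representables, and --- the genuinely delicate point --- organize the bar-resolution homotopy inverse to $\mathrm{Mod}(\aA)\hookrightarrow\mathrm{Mod}_\infty(\aA)$ so that it is functorial in $\aA$ and simultaneously compatible with quasi-representability and with the cofibrancy demanded by To\"en's model. This is bookkeeping rather than a single conceptual leap, but it is where the characteristic-zero assumption and most of the labor reside. An alternative that sidesteps To\"en's explicit model is to show directly that $\AAinf(\aD,\aE)$ corepresents the functor $\aC\mapsto\Hom_{\CHo(\dgcat)}(\aC\ott^\LL\aD,\aE)$, using currying for $A_\infty$-functors together with the identification of morphisms in $\CHo(\dgcat)$ with homotopy classes of $A_\infty$-functors; this merely relocates the same difficulties into that last identification.
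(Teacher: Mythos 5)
The paper contains no proof of this statement: it is quoted directly from Faonte \cite[Thm.~1.7]{Faonte} (and attributed to Kontsevich), so there is no internal argument to measure your proposal against. That said, your outline reproduces the standard strategy, which is essentially Faonte's: identify $\aE$ with its quasi-representable $A_\infty$-modules via the $A_\infty$-Yoneda embedding, curry to right quasi-representable $A_\infty$-bimodules over $\aD\ott_k\aE^{\op}$, rectify to genuine dg bimodules, and land in To\"en's model $\mathrm{rep}(\aD,\aE)$ for $\RHom(\aD,\aE)$. The ingredients and the order of reductions are correct, so as a road map this is sound; but, as you concede, all of the substantive content --- the higher components and Koszul signs of the Yoneda functor, the functorial bar-resolution inverse to $\mathrm{Mod}(\aA)\hookrightarrow\mathrm{Mod}_\infty(\aA)$, the compatibility with cofibrant replacement, and the naturality of the whole zig-zag --- is deferred, so what you have is a plan rather than a proof. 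One point to correct: the characteristic-zero hypothesis is not what makes homotopy transfer or rectification work (over any field the transfer theorem of Thm.~\ref{HPthm} applies, and $k$-flatness already collapses $\ott^\LL$ to $\ott_k$); it is carried along from the cited statement rather than consumed at the step where you place it, so you should not lean on it there.
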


\section{Algebraic cones}\label{enrichedsec}
If $f : X \to Y$ is a continuous map between topological spaces $X$ and $Y$ then the cone $C(f)$ of $f$ is given by the pushout
$$C(f) = CX \sqcup_f Y \conj{ where } CX = X\times [0,1]/X\times \{1\}.$$
This topological cone acts as a stand in for the quotient $Y/im(f)$ in the long exact sequence of homology groups associated to the quotient
$$\cdots \to H_n(X) \xto{f_*} H_n(Y) \to H_n(C(f)) \to H_{n-1}(X) \to \cdots$$
because $H_n(C(f)) \cong H_n(Y/im(f))$ for $n> 0$.
After passing from topological spaces to cochain complexes,
the cone $C(f)$ of a map $f : (X,d_X) \to (Y,d_Y)$ is the cochain complex formed by
\begin{equation}\label{coneeq}
  C(f) = (X[1] \opp Y,d_{C(f)}) \conj{ where }  d_{C(f)} = \left(\begin{array}{cc} -d_X & 0  \\ f & d_Y \end{array} \right).
  \end{equation}
In analogy with the relationship between the homology of quotient space and the topological cone,
when $f : X \to Y$ is a map in an abelian category $\aA$,  the algebraic cone 
is isomorphic to the quotient in the derived category $D^b(\aA)$, $C(f) \cong Y/im(f)$. This is why the existence of objects equivalent to algebraic cones is a principal component of the definition of a triangulated category. For a similar discussion of cones see \cite[\S 3.1]{Auroux}.
In the remainder of this section we will reformulate the requirement that an algebraic cone exists in a manner which admits generalizations.

When an $\Ainf$-category $\aC$ is pretriangulated 
each cycle $f \in Hom_{\aC}(X,Y)$, has a cone $C(f) \in Ob(\aC)$. 
Roughly speaking, a cone is said to exist in $\aC$ when there is an object $C(f) \in \Ob(\aC)$ representing the cone of $f$ in the image of the Yoneda embedding.
Let us explain precisely what we mean. For any $\Ainf$-category $\aC$, the
category of modules $\aC\module=\AAinf(\aC,\Ch_k)$ consists of
$\Ainf$-functors from $\aC$ to the dg category of cochain complexes
$\Ch_k$. There is a Yoneda embedding of $\aC$ into its associated category
of modules $\aC\module$
$$\eY : \aC \to \aC\module, \conj{ } X \mapsto \eY_X\conj{ where } \eY_X(Y) = \Hom_{\aC}(X,Y).$$ 
This is an embedding in the sense that the associated functor $\Ho(\eY) : \Ho(\aC)\to\Ho(\aC\module)$ between homotopy categories is full and faithful.

\begin{defn}\label{conedef}
If $f \in\Hom_{\aC}(Y_0,Y_1)$ is a cycle, (so $m_1(f) = 0$), then the {\em cone $\eC(f)\in\Ob(\aC\module)$ of $f$} is the $\aC$-module determined by the assignment
$$\eC(f)(X) = \Hom(X,Y_0)[1] \opp \Hom(X,Y_1)$$
and structure maps
\begin{align*}
m^{\eC(f)}_d((b_0,b_1),a_{d-1},\ldots, a_1) = (m_d(b_0,a_{d-1},\ldots,a_1),\\ m_d(b_1,a_{d-1},\ldots,a_1) + m_{d+1}(f,b_0, a_{d-1},\ldots, a_1))
\end{align*}
\end{defn}

\begin{rmk}
The definition stems from the observation that for any $\Ainf$-category
$\aC$, the homotopy category of modules $\Ho(\aC\module)$ is triangulated in
the sense of Verdier and $\eC(f) \cong C(\eY(f))$ where
$\eY(f) : \eY_{Y_0} \to \eY_{Y_1}$.
\end{rmk}

\begin{rmk}
When $\aC = \Ch_k$, so $m_d=0$ for $d> 2$, 
the $\Ainf$-cone in Def. \ref{conedef} above is equivalent to the image of the algebraic cone in Eqn. \eqref{coneeq} under the Yoneda map. 
$$\eC(f) \cong \eY_{C(f)}.$$
\end{rmk}

This justifies the next definition.

\begin{defn}\label{inconedef}
  Suppose that $\aC$ is an $\Ainf$-category and $f \in\Hom_{\aC}(Y_0,Y_1)$ is a cycle, so that $m_1(f) = 0$,
then an object {\em $X\in\Ob(\aC)$ is a cone of $f$ in $\aC$} when there is an isomorphism
$$\eY_{X} \cong \eC(f)$$
in the homotopy category $\Ho(\aC\module)$ of $\aC$-modules. In particular,
{\em $f$ has a cone $C(f)$ in $\aC$} when there is such a cone object $X$ in
$\aC$. Since the Yoneda embedding is full and faithful up to homotopy, any
two cones of a single $f$ must be isomorphic in $\Ho(\aC)$.
\end{defn}

Use of the Yoneda embedding to characterize the cone construction in dg and $\Ainf$-categories appeared in \cite{BK, Soibelman}.

\subsection{A relative perspective on triangles}\label{reltrisec}
In what follows we review a different perspective of triangulated categories often attributed to Kontsevich, see \cite{KLH}, \cite[I, (3g)]{Seidel}.

If $f \in \Hom_{\aC}(X,Y)$ is a cycle of degree $\vnp{f} = 0$ and
$Z \in \Ob(\aC)$ is the cone on $f$, as above, then there are cycles
$g : Y \to Z$ and $h : Z \to X$ of degrees $\vnp{g} = 0$ and $\vnp{h} = 1$
respectively which is summarized by
$$ X\xto{f} Y \xto{g} Z \xto{h} X[1].$$
While any such collection of maps within an $\Ainf$-category $\aC$ could be called a triangle, a distinguished triangle must satisfy the additional properties below.
\begin{enumerate}
\item In the homotopy category, composing any two adjacent maps is zero
\begin{equation}\label{m2def}
  m_2(g, f) = 0, \quad m_2(h,g) = 0 \conj{ and } m_2(f,h) = 0.
\end{equation}
\item The Massey product of three consecutive maps is identity
\begin{equation}\label{m3def}
m_3(h,g,f) = 1_{X},\quad m_3(f,h,g) = 1_{Y} \conj{ and } m_3(g, f, h) = 1_Z.
\end{equation}
\end{enumerate}
It happens that these conditions suffice to distinguish distinguished
triangles. In particular, $Z\simeq C(f)$ when conditions (1) and (2) hold.
There is a category $\Tri$ which packages the information above in such a
way that $\Ainf$-functors $t : \Tri \to \aC$ from $\Tri$ to $\aC$ correspond
to distinguished triangles in $\aC$.

\begin{defn}\label{tricatdef}
  There is an $\Ainf$-category $\Tri$ which encodes  the constraints satisfied by a distinguished triangle. The objects are given by the set $\Ob(\Tri) =\{ A,B,C \}$ and maps
  are given by identity maps $1_A$,$1_B$ and $1_C$ together with maps
  $\a : A\to B$, $\b : B\to C$ and $\ga : C \to A$ of degrees
  $\vnp{\a} = 0$, $\vnp{\b}=0$ and $\vnp{\ga}=1$ respectively. 
$$\begin{tikzpicture}[scale=10, node distance=2cm]
\node (S_1) {$A$};
\node (X) [below of=S_1] {};
\node (A_2) [left=1cm of X] {$B$};
\node (S_2) [right=1cm of X] {$C$};
\draw[->] (S_1) to node[swap] {$\a$} (A_2);
\draw[->] (A_2) to node {$\b$} (S_2);
\draw[->] (S_2) to node[above] {$\ga$} (S_1);
\end{tikzpicture}$$
 The $\Ainf$-structure is determined by the requirements of strict unitality,
as in Def. \ref{ainfdef}, and Eqns \eqref{m2def} and \eqref{m3def} above. This is the partially wrapped Fukaya category of the disk with three marked points on the boundary, see Def. \ref{partfukdef} \cite{DK, HKK, Nadler}. 
\end{defn}

\begin{thm}\label{deltathm}
A triangle $X \xto{f} Y \xto{g} Z\xto{h} X[1]$ in $\Ho(\aC)$ is distinguished if and only if there is an $\Ainf$-functor $t : \Tri \to \aC$ such that 
\begin{enumerate}
\item $t(A) = X$, $t(B) = Y$ and $t(C) = Z$
\item $[t](\a) = f$, $[t](\b) = g$ and $[t](\ga) = h$ 
\end{enumerate}
where $[t] = \Ho(t) : \Ho(\Tri) \to \Ho(\aC)$.
\end{thm}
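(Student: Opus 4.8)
The plan is to prove both implications by directly unpacking what it means for an $\Ainf$-functor $t:\Tri\to\aC$ to exist with the stated behavior, and by comparing against the cone characterization in Definition \ref{inconedef}. The key structural fact is that $\Tri$ is \emph{freely} generated as an $\Ainf$-category by the three arrows $\a,\b,\ga$ subject only to strict unitality and the specific values of $m_2$ and $m_3$ on consecutive triples prescribed in Equations \eqref{m2def} and \eqref{m3def}; all higher products $m_d$ for $d\geq 4$ vanish and $m_1=0$. Consequently, giving an $\Ainf$-functor $t:\Tri\to\aC$ is the same as choosing objects $t(A),t(B),t(C)\in\Ob(\aC)$ together with cycles $f'=t^1(\a)$, $g'=t^1(\b)$, $h'=t^1(\ga)$ in $\aC$ of the appropriate degrees, plus higher components $t^d$ ($d\geq 2$), all required to satisfy the $\Ainf$-functor equations of Definition \ref{functordef}. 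The content of those equations, read degree by degree, forces precisely the homotopy-level relations \eqref{m2def} and \eqref{m3def} to hold for $f',g',h'$ in $\Ho(\aC)$ (the higher $t^d$ supply the null-homotopies).

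For the forward direction, I would start from the construction recalled just before the definition: given a distinguished triangle $X\xto{f}Y\xto{g}Z\xto{h}X[1]$ with $Z$ a cone of $f$, Definition \ref{inconedef} gives an isomorphism $\eY_Z\cong\eC(f)$ in $\Ho(\aC\module)$. Using the explicit formula for $\eC(f)$ in Definition \ref{conedef}, one extracts module maps realizing $g$ and $h$ together with the module homotopies witnessing \eqref{m2def} and \eqref{m3def}; since the Yoneda embedding is full and faithful up to homotopy, these transport back to the desired data in $\aC$. Assembling $t(A)=X$, $t(B)=Y$, $t(C)=Z$, $t^1$ sending $\a,\b,\ga$ to $f,g,h$, and defining the higher $t^d$ from the chosen null-homotopies, one checks the $\Ainf$-functor relations hold by construction. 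Strict unitality of $\Tri$ means $t$ is determined on identities, and the relations $m_d(\ldots,1,\ldots)=0$ for $d\neq 2$ make the unit conditions on $t$ automatic.

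For the converse, suppose $t:\Tri\to\aC$ is given with $t(A)=X$, $t(B)=Y$, $t(C)=Z$ and $[t](\a)=f$, $[t](\b)=g$, $[t](\ga)=h$. Post-composing with the Yoneda embedding $\eY:\aC\to\aC\module$ yields an $\Ainf$-functor $\eY\circ t:\Tri\to\aC\module$; I would then show that the data $(\eY_X,\eY_Y,\eY_Z,\eY_f,\eY_g,\eY_h)$ together with the transported higher components exhibit $\eY_Z$ as isomorphic to $\eC(\eY_f)$ in $\Ho(\aC\module)$. Concretely, build an explicit module map $\eY_Z\to\eC(\eY_f)$ out of $\eY_g$ and $\eY_h$ and the components $t^{\geq 2}$, and use the relations \eqref{m2def}, \eqref{m3def}, encoded by the $\Ainf$-functor equations, to produce a homotopy inverse; this is exactly the standard argument that conditions (1) and (2) suffice to pin down the cone, now phrased in $\aC\module$. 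By full faithfulness of $\Ho(\eY)$, $\eY_Z\cong\eC(\eY_f)=\eC(f)$ means $Z$ is a cone of $f$ in $\aC$, so the triangle is distinguished.

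The main obstacle will be the bookkeeping in the converse: one must track how the higher functor components $t^d$ assemble into genuine chain homotopies in $\aC\module$ realizing the isomorphism $\eY_Z\cong\eC(f)$, and verify the signs in the $\Ainf$-functor equations \eqref{ainfrel} match the signs in the cone differential \eqref{coneeq} and in Definition \ref{conedef}. It is plausible that $\Tri$ is not minimal (it may have $m_d\neq 0$ for some $d\geq 4$ forced by the $\Ainf$-relations, or be only $\Ainf$-equivalent to such a minimal model), in which case one should either cite the identification of $\Tri$ with the partially wrapped Fukaya category referenced in Definition \ref{tricatdef} or replace $\Tri$ by its minimal model via Theorem \ref{HPthm} before running the argument; I would handle this by remarking that since $m_1=0$ on $\Tri$ it already equals its own homotopy category on morphism spaces, so no further reduction is needed and the higher products are whatever the $\Ainf$-relations dictate, which is precisely the information the functor equations must respect.
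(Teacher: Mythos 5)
First, a point of comparison: the paper does not actually prove Theorem \ref{deltathm}. It is quoted as a known statement attributed to Kontsevich, with pointers to \cite{KLH} and \cite[I, (3g)]{Seidel}, and the surrounding text only asserts informally that conditions \eqref{m2def}--\eqref{m3def} ``suffice to distinguish distinguished triangles.'' So there is no in-paper argument to measure yours against; the relevant benchmark is the standard proof in those references, and your proposal follows exactly that route: push everything into $\aC\module$ via Yoneda and identify $\eY_Z$ with $\eC(f)$ from Definition \ref{conedef}. Your structural preliminaries are sound. In particular your worry about minimality is unfounded: every $\Hom$-space of $\Tri$ is at most one-dimensional and concentrated in a single degree, and a composable word of $d$ non-identity arrows winds around the triangle picking up degree roughly $d/3$ from $\ga$ while $m_d$ shifts by $2-d$, so $m_d=0$ for $d\geq 4$ and $m_1=0$ by inspection; hence an $\Ainf$-functor out of $\Tri$ is precisely the package of cycles plus higher components you describe, and the $d=2,3$ functor equations of Definition \ref{functordef} do encode \eqref{m2def} and \eqref{m3def} up to boundaries, with $t^2$ supplying the null-homotopies.

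The gaps are at the two places where the content actually sits, and in both cases you assert the construction rather than give it. In the forward direction you must produce \emph{all} components $t^d$, $d\geq 2$, and verify an infinite family of functor equations; ``defining the higher $t^d$ from the chosen null-homotopies'' does not do this. The clean fix is to write down the functor into $\aC\module$ with target objects $\eY_X$, $\eY_Y$, $\eC(f)$, taking $t^1(\b)$ to be the inclusion of the summand $\Hom(-,Y)$, $t^1(\ga)$ the projection onto $\Hom(-,X)[1]$, $t^2(\b,\a)$ the tautological degree $-1$ map into the first summand, and all remaining higher components zero; the functor equations then close up in finitely many steps, and one transports back along the quasi-equivalence of $\aC$ with its Yoneda image. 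In the converse you never exhibit the comparison map. The candidate is the pre-module homomorphism $\eC(f)\to\eY_Z$ whose first-order term is $(b_0,b_1)\mapsto \pm m_2(t^2(\b,\a),b_0)\pm m_2(t^1(\b),b_1)$, with higher-order corrections built from $t^{\geq 2}$; the $d=3$ functor equation, i.e.\ the identities \eqref{m3def}, is exactly what makes this map invertible on homology, and full faithfulness of $\Ho(\eY)$ then identifies $Z$ with a cone of $f$ as in Definition \ref{inconedef}. Until these two maps are written down and the signs checked against \eqref{ainfrel} and \eqref{coneeq}, what you have is the correct skeleton of the standard argument rather than a proof.
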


Informally, the theorem above says that $\Ainf$-functors correspond to
distinguished triangles in a given $\Ainf$-category $\aC$. 
This theorem will be used to rephrase the condition that an $\Ainf$-category  $\aC$ contains a cone object $C(f)$ for every cycle $f \in Hom_{\aC}(X,Y)$.


\subsection{$A_2$-representations as a subcategory}\label{qrepsec}

Since the third object $Z$ in a distinguished triangle is determined up to
isomorphism by its realization as the cone on the map $f : X\to Y$ between
the other two, the category $\Tri$ is Morita equivalent to the subcategory
consisting of two objects and a map between them
$$A_2 := [X \xto{f} Y].$$
This is called the $A_2$-quiver. A {\em module} or {\em representation} $M$ of $A_2$ consists of two $k$-vector spaces $V$ and $W$ assigned to the objects $X$ and $Y$ 
$$M(X) = V \conj{ and } M(Y) = W$$
together with a linear map $M(f) : V \to W$. In other words, a functor $A_2 \to \Vect_k$. A map $q : M \to N$ between two representations is a natural transformation. If $M$ and $N$ are two representations then there is a sum $M\opp N$ determined by the assignments $(M\opp N)(X) = M(X)\opp N(X)$, $(M\opp N)(Y) = M(Y)\opp N(Y)$ and  $(M\opp N)(f) = M(f) \opp N(f)$. If any isomorphism of the form $K \cong M \opp N$ implies that either $M=0$ or $N=0$ then the module $K$ is called {\em indecomposable}.

The representations of $A_2$ form the objects of an abelian category $A_2\module$ containing precisely three indecomposable objects: $P$, $S_1$ and $S_2$ determined by the table of functors below.
\begin{enumerate}
\item $P(X) = k$, $P(Y) = k$ and $P(f) = 1_k$.
\item $S_1(X) = k$, $S_1(Y) = 0$ and $S_1(f) = 0$.
\item $S_2(X) = 0$, $S_2(Y) = k$ and $S_2(f) = 0$.
\end{enumerate}

These modules form a short exact sequence
\begin{equation}\label{sesab}
  0 \to S_2 \to P \to S_1 \to 0
  \end{equation}
which is universal in the sense that the structure of the category $A_2\module$ is
  determined by \eqref{sesab} and the axioms of abelian categories.  The
  space $\Ext^1(S_1,S_2)$ is spanned by this extension. All of the other
  $\Ext$-groups vanish because $S_2$ and $P$ are projective. From Eqn.
  \eqref{sesab}, we see that the chain complex $Q = [S_2 \to P]$ is a projective resolution of $S_1$.
  Theorem \ref{HPthm} can be used compute the $\Ainf$-structure of the
  $\Ext$-algebra:
  $$\End^*(Q\opp P\opp S_2) \xto{\sim} \Ext^*(S_1\opp P \opp S_2)$$
  as in Example \ref{endex}. The $\Ainf$-structure on the righthand side of
  this equation is well-known, see \cite[App. B. 2]{Madsen}, up to sign conventions, it is identical to the
  category $\Tri$
$$\Tri \cong \Ext^*(S_1\opp P \opp S_2).$$

It is in this way that the $\Ainf$-category $\Tri$, the principal datum of a
triangulated category and the definition of algebraic cone stem directly
from the representation theory of the $A_2$-quiver.

\subsection{Cones as completions}\label{tricat}

The discussion above leads us to a generalization of
the requirement that an $\Ainf$-category have cones corresponding to each cycle.

\newcommand{\A}{A}
\renewcommand{\rA}{\bar{A}}

\begin{defn}\label{completedef}
Suppose that $\iota : \A\subset \rA$ is a pair of $\Ainf$-categories.
Then an $\Ainf$-category $\aC$ is $\iota$-complete when the pullback functor:
$\iota^* : \RHom(\rA,\aC) \to \RHom(\A,\aC)$ is quasi-essentially surjective (i.e. $\Ho(\iota^*)$ is essentially surjective).

If $\aC$ is $\iota$-complete then, up to homotopy, every functor $F : \A \to \aC$ from $\A$ into $\aC$ lifts to a functor $\tilde{F} : \rA \to \aC$ in such a way that the diagram below commutes.
$$\begin{tikzpicture}[scale=10, node distance=1.5cm]
\node (A1) {$\A$};
\node (C1) [below of=A1] {$\rA$};
\node (D1) [right of=A1] {$\aC$};
\draw[->] (A1) to node [swap] {$\iota$} (C1);
\draw[->] (A1) to node {$F$} (D1);
\draw[dashed,->] (C1) to node [swap] {$\tilde{F}$} (D1);
\end{tikzpicture}$$
\end{defn}

\begin{example}
Suppose that $\A$ is a diagram category consisting of a collection of disjoint points and $\rA= CA$ is the cone category obtained by adding an initial object. Then a category $\aC$ containing $\A$-limits will be $\iota$-complete (where $\iota : A \to CA$).
\end{example}  


In examples of most interest the functor $\iota^*$ is a
quasi-equivalence; both quasi-essentially surjective and quasi-fully faithful.

The definition below introduces some terminology to clarify the generalization we are making.

\begin{defn}
Suppose $\iota : A\to \bar{A}$ is a pair as in Def. \ref{completedef} above. A functor $F : A \to \eC$ is a {\em morphism} or {\em $\iota$-morphism}. A lift $\tilde{F} : \bar{A} \to \eC$ is a {\em $\iota$-distinguished triangle} associated $F$. For any morphism $F$, the additional information needed to define a lift $\tilde{F}$ is a {\em cone} or {\em $\iota$-cone} on $F$.
\end{defn}

\begin{example}\label{ibcompex}
Suppose $B$ is a finite dimensional algebra over $k$ and $\aS$ is the
collection of finite dimensional simple $B$-modules $\aS \subset B\module$. 
Set $S = \opp_{M\in\aS} M$. Then there are $\Ainf$-categories
$$\A_B = \End^*(S)\conj{ and } \rA_B = \End^*(S\opp B).$$ 
In the second term, $B$ is used to denote the algebra $B$ viewed as a left $B$-module over
itself.  
After applying Thm. \ref{HPthm} to both sides, the inclusion $\A_B \hookrightarrow \rA_B$ of dg algebras determines an $\Ainf$-functor $\iota : \A_B \to \rA_B$. In this sense, there is a notion of $\iota$-completeness associated to every finite dimensional algebra $B$.
\end{example}

When $B=A_2$, Thm. \ref{deltathm} combines with the discussion in Section \ref{qrepsec} to show that an $\Ainf$-category $\aC$ contains a cone $C(f)$ for every cycle $f\in\aC$ if and only if $\aC$ is $\iota$-complete as in the example above. 

Since there are many choices of $\iota$, it is important to limit investigation to interesting choices.
In the next section we will investigate this condition for the preprojective algebra of the $A_2$-quiver. For this category, it might make sense to call the $\iota$-distingished triangles, distinguished pyramids.

\newcommand{\palg}{\Pi Q}
\section{Preprojective cones}\label{preprojalgsec}

In this section we introduce the preprojective algebras $\palg$ and compute
the $\Ainf$-category $\PA$ associated to the derived endomorphisms of
indecomposable modules over $\Pi A_2$. The category $\PA$ constitutes our
generalization of the category $\Tri$ which was seen to describe
distinguished triangles in Theorem \ref{deltathm}.

A {\em quiver} is a finite directed graph $Q = (Q_0, Q_1)$ consisting of
{\em vertices} $Q_0$ and {\em edges} $Q_1$. Each edge $f\in Q_1$ has a {\em
  start} $s(f)\in Q_0$ and a {\em tail} $t(f)\in Q_0$. For example, $A_2$ in
Section \ref{qrepsec} is a quiver of the form $A_2 = (\{ X,Y\}, \{X \xto{f} Y\})$,
$t(f) = Y$ and $s(f) = X$.

Associated to any quiver $Q$, is a {\em path algebra} $kQ$ consisting of
$k$-linear combinations of paths between the vertices in $Q$. Any two such
paths $a, b\in kQ$ multiply by concatenation $ab$ when the vertex at which
$a$ ends agrees with the vertex at which $b$ begins; the product is defined
to be zero otherwise. The category of left modules over the path algebra
$kQ$ is equivalent to the category of functors $Q\module = \Hom(Q, \Vect_k)$
appearing in Section \ref{qrepsec}
$$kQ\module \cong Q\module.$$

The preprojective algebra $\palg$ is a different algebra associated to
the quiver $Q$. Given such a quiver $Q$, there is another quiver $\overline{Q}$
which is obtained by adding a formal inverse $f^* : t(f) \to s(f)$ to each
arrow $f : s(f) \to t(f)$. If the set of these arrows is denoted by $Q_1^*$ then $\overline{Q} = (Q_0, Q_1\cup Q_1^*)$. Let $\rho$ be the element of the path algebra $k\overline{Q}$ given by the sum
$$\rho = \sum_{f\in Q_1} (f f^* - f^* f).$$
The {\em preprojective algebra} is the quotient of the path algebra by the ideal generated by $\rho$
$$\palg = k\overline{Q}/(\rho).$$

\begin{rmk}
The category $\palg\module$ is a kind of
  next-simplest most-interesting replacement for the category $Q\module$, see \cite{CB} or \cite[Thm. C]{Ringel}.
\end{rmk}

\begin{example}
  When $Q = A_n$ is the graph consisting of $n$ vertices $\{ 1,\ldots, n\}$
  with one directed edge $(i,i+1) : i\to i+1$ for $1\leq i < n$. Then the
  graph $\overline{A}_n$ is formed by adding the inverses
  $(i,i+1)^* = (i+1,i) : i+1 \to i$ pictured below.
$$\begin{tikzpicture}[scale=10, node distance=2cm]
\node (S) {$1$};
\node (A) [right of=S] {$\cdots$};
\node (B) [right of=A] {$i-1$};
\node (C) [right of=B] {$\,\,i\,\,$};
\node (D) [right of=C] {$i+1$};
\node (E) [right of=D] {$\cdots$};
\node (F) [right of=E] {$n$};

\draw[->,bend left=25] (S) to node {} (A);
\draw[->,bend left=25] (A) to node {} (S);

\draw[->,bend left=25] (A) to node {} (B);
\draw[->,bend left=25] (B) to node {} (A);



\draw[->,bend left=25] (B) to node {} (C);
\draw[->,bend left=25] (C) to node {} (B);

\draw[->,bend left=25] (C) to node {} (D);
\draw[->,bend left=25] (D) to node {} (C);

\draw[->,bend left=25] (D) to node {} (E);
\draw[->,bend left=25] (E) to node {} (D);

\draw[->,bend left=25] (E) to node {} (F);
\draw[->,bend left=25] (F) to node {} (E);

\end{tikzpicture}$$




Quotienting the path algebra $k\overline{A}_n$ 
by the ideal $(\rho)$, described above, implies the relations of the preprojective algebra
\begin{align*}
(i,i-1)(i-1,i) &= (i,i+1)(i+1,i) \conj{ for } i = 2,\ldots,n-1,\\
& (1,2)(2,1) = 0 \conj{ and } (n,n-1)(n-1,n) = 0.
\end{align*}

These preprojective algebras are closely related to the algebras studied by Khovanov-Seidel: if $A_n^!$ denotes the Koszul dual of the Khovanov-Seidel algebra then the preprojective algebra is obtained by adding one relation
$$\Pi A_n = A_n^!/\inp{(n,n-1)(n-1,n)}$$
see \cite{KS} and \cite[\S 4]{QS}.
\end{example}

\subsection{The algebra $\Pi A_2$}\label{pitwosec}

In this section we will discuss the algebra $\Pi A_2$ and its representation theory in more detail.

\begin{defn}\label{pai2def}
  The algebra $\Pi A_2$ may be thought of as a category with two objects
  $\Ob(\Pi A_2) = \{1, 2\}$, each with its own identity map $1_1$ or $1_2$,
  and two maps $(12) : 1\to 2$ and $(21) : 2\to 1$ which satisfy two
  relations: 
$$(12)(21) = 0 \conj{ and } (21)(12) = 0.$$ 
The quiver underlying this construction is pictured below.
$$\begin{tikzpicture}[scale=10, node distance=3cm]
\node (S) {$1$};
\node (A) [right of=S] {$2$};
\draw[->,bend left=32] (S) to node {$(12)$} (A);
\draw[->,bend left=32] (A) to node {$(21)$} (S);
\end{tikzpicture}$$
\end{defn}

The representation theory of this algebra is well-known. The abelian category $\Pi A_2\module$ of finitely generated representations has four indecomposable modules: $\simp_1$, $\simp_2$, $\proj_1$ and $\proj_2$ \cite[\S 8]{Leclercetal}.  
The first two modules $\simp_1$ and $\simp_2$ are $1$-dimensional simple modules associated to the vertices $1$ and $2$, 
  $$\simp_1 = k(1) \conj{ and } \simp_2 = k(2)$$
 where $(i)$ acts as the identity on $S_i$ and all other basis elements of $\Pi A_2$ act trivially. 
The second two modules are projective modules spanned by the set of paths which begin at their respective vertices
$$\proj_1 = \Pi A_2 (1) \conj{ and } \proj_2 = \Pi A_2(2).$$
While there are no maps of degree zero between simple modules, the arrows $(12)$ and $(21)$ in the definition above induce maps $(12) : \proj_1 \to \proj_2$ and $(21) : \proj_2\to \proj_1$ between projective modules.  Each of the two maps, indicated by bold arrows in the diagram below, has a kernel and image, which give the two maps between projectives and simples also pictured in the diagrams below.
$$\begin{tikzpicture}[scale=10, node distance=1.5cm]
\node (S) {$\simp_1 \cong im (12)$};
\node (A) [right of=S] {};
\node (D) [right of=A] {$ker (12) \cong \simp_2$};
\node (B) [above of=A] {$\proj_2$};
\node (C) [below of=A] {$\proj_1$};
\draw[thick,->] (C) to node {} (B);
\draw[right hook->] (S) to node {$j_2$} (B);
\draw[->>] (C) to node {$p_1$} (S);
\draw[left hook->] (D) to node {$j_1$} (C);
\node (Z)  [right of=D] {};
\node (S2) [right of=Z] {$\simp_1 \cong ker (21)$};
\node (A2) [right of=S2] {};
\node (D2) [right of=A2] {$im (21) \cong \simp_2$};
\node (B2) [above of=A2] {$\proj_2$};
\node (C2) [below of=A2] {$\proj_1$};
\draw[thick,->] (B2) to node {} (C2);
\draw[right hook->] (S2) to node {$j_2$} (B2);
\draw[->>] (B2) to node {$p_2$} (D2);
\draw[left hook->] (D2) to node {$j_1$} (C2);
\end{tikzpicture}$$

There is an obvious symmetry implicit in the discussion above which we next record. 

\begin{prop}\label{osymprop}
There is an involution $\osym : \Pi A_2\module \to \Pi A_2\module$ induced by exchanging the indecomposable modules
$$\proj_1 \leftrightarrow \proj_2 \conj{ and } \simp_1 \leftrightarrow \simp_2$$
\end{prop}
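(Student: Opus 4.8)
The plan is to construct $\osym$ explicitly on the level of the algebra $\Pi A_2$ and then push it forward to modules. First I would observe that the defining quiver of $\Pi A_2$ in Definition \ref{pai2def} admits an evident automorphism that swaps the vertices $1\leftrightarrow 2$ and the arrows $(12)\leftrightarrow(21)$; since the relations $(12)(21)=0$ and $(21)(12)=0$ are interchanged with each other under this swap, the assignment extends to a $k$-algebra automorphism $\phi : \Pi A_2 \xto{\sim} \Pi A_2$ (in fact an involution, $\phi^2 = \Id$). Restriction of scalars along $\phi$ then gives an exact functor $\osym := \phi^* : \Pi A_2\module \to \Pi A_2\module$ which is its own quasi-inverse, hence an involution on the category.

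Next I would check that $\osym$ has the stated effect on the four indecomposable modules. Because $\phi$ swaps the two primitive idempotents $(1)$ and $(2)$, the one-dimensional module $\simp_1 = k(1)$, on which only $(1)$ acts nontrivially, is carried to the module on which only $(2)$ acts nontrivially, i.e. $\simp_2$; symmetrically $\osym(\simp_2)\cong\simp_1$. For the projectives, $\proj_1 = \Pi A_2(1)$ has underlying space the span of paths starting at vertex $1$, and $\phi$ matches these bijectively with paths starting at vertex $2$, so $\osym(\proj_1)\cong \Pi A_2(2) = \proj_2$ and likewise $\osym(\proj_2)\cong\proj_1$. Since $\Pi A_2$ is a finite-dimensional algebra with exactly these four indecomposables (as recalled from \cite[\S 8]{Leclercetal}), this identifies $\osym$ completely, and the short exact sequences exhibiting $\simp_i$ as images and kernels of $(12),(21)$ are interchanged accordingly, which also reconfirms exactness.

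I do not expect a serious obstacle here: the only thing to be careful about is that the quiver automorphism genuinely descends to the preprojective quotient, which is immediate since the ideal $(\rho)$ with $\rho = (12)(21)-(21)(12)$ (the case $n=2$ of the relations displayed earlier, together with the two boundary relations that both read $(12)(21)=0$, $(21)(12)=0$) is visibly $\phi$-stable; and that $\osym$ is a genuine involution rather than merely a self-inverse equivalence, which follows from $\phi^2=\Id$ on the nose. One could alternatively give a more invariant argument by noting that the relation $\rho$ is anti-invariant, $\phi(\rho) = -\rho$, so $\phi$ preserves the two-sided ideal it generates; this makes the descent transparent without choosing bases. The remaining verifications that $\osym$ commutes with the structural maps $j_1, j_2, p_1, p_2$ up to the evident relabeling are routine and I would state them without detailed computation.
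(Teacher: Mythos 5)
Your argument is correct and is exactly the ``obvious symmetry'' the paper has in mind (the paper records Proposition \ref{osymprop} without proof): the vertex/arrow swap preserves the relations --- indeed $\phi(\rho)=-\rho$, so the preprojective ideal is stable --- hence induces an algebra involution of $\Pi A_2$, and restriction of scalars along it permutes the simples and projectives as claimed. No gaps; the verification that the four indecomposables are exchanged as stated is the standard one.
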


The two short exact sequences
\begin{equation}\label{seseqn}
  0 \to \simp_1 \xto{j_2} \proj_2 \xto{p_2} \simp_2 \to 0\conj{ and } 0 \to \simp_2 \xto{j_1} \proj_1 \xto{p_1} \simp_1 \to 0
  \end{equation}
correspond to two maps $\a : \simp_2\to \simp_1$ and $\b : \simp_1 \to \simp_2$ which span the groups $\Ext^1(\simp_2,\simp_1)$ and $\Ext^1(\simp_1,\simp_2)$ respectively. 

There are projective resolutions $\tsimp_i$ of simple modules $\simp_i$ by the projective modules $\proj_1$ and $\proj_2$
$$\tsimp_i=[ \cdots \to \proj_1 \xto{(12)} \proj_2 \xto{(21)} \proj_1 \to \cdots \to \proj_i] \xto{p_i} \simp_i \to 0$$

The structure exhibited among the indecomposables of $\Pi A_2$ is a kind of
double of the structure discussed in Section \ref{qrepsec}. In this paper we
seek to understand what the $\Pi A_2$ analogue of $\Delta$ corepresents: a
preprojective analogue of distinguished triangles and algebraic cones. In order to answer this
question we construct an $\Ainf$-category analogue $\Pi$ of $\Delta$.

In more detail, we wish to understand the $\Ext$-algebra of
$M = \simp_1 \opp \proj_1 \opp \proj_2 \opp \simp_2$.  After replacing each simple $\simp_i$
with the projective resolution $\tsimp_i$, and setting
$\tM = \tsimp_1 \opp \tproj_1 \opp \tproj_2 \opp \tsimp_2$, Theorem \ref{HPthm}, allows us to
compute an $\Ainf$-structure on the $\Ext$-algebra $\Ext^*(M)$ so that the
dg category $\End^*(\tM)$ and $\Ext^*(M)$ are $\Ainf$-equivalent.
$$\End^*(\tM)\xto{\sim} \Ext^*(M)$$
The $\Ainf$-category $\Pi = \Ext^*(M)$ will serve as our replacement for
$\Delta$ in what follows.

\subsection{The category $\PA$}\label{picatsec}
As an $\Ainf$-category $\PA$ has $m_1 = 0$. The generating maps of the category $\PA$ are pictured below: 

$$\begin{tikzpicture}[scale=10, node distance=2cm]
\node (S_1) {$S_1$};
\node (A) [right of=S_1] {};
\node (B) [right of=A] {};
\node (C) [right of=B] {};
\node (S_2) [right of=C] {$\simp_2$};
\node (A_2) [above of=B] {$\proj_2$};
\node (A_1) [below of=B] {$\proj_1$};
\draw[->] (S_1) to node {$j_2$} (A_2);
\draw[->] (A_2) to node {$p_2$} (S_2);
\draw[->] (S_2) to node {$j_1$} (A_1);
\draw[->] (A_1) to node {$p_1$} (S_1);
\draw[transform canvas={xshift=-0.5ex},->] (A_2) to node[pos=.25,left] {$(21)$} (A_1);
\draw[transform canvas={xshift=0.5ex},->] (A_1) to node[pos=.25,right] {$(12)$} (A_2);
\draw[transform canvas={yshift=+0.5ex},->] (S_2) to node[pos=.75,above] {$\alpha$} (S_1);
\draw[transform canvas={yshift=-0.5ex},->] (S_1) to node[pos=.75,below] {$\beta$} (S_2);
\end{tikzpicture}$$
where $\vnp{\a} = 1$, $\vnp{\b} = 1$ and all other maps have degree $0$. 

\begin{remark}
  All of the tables in this section are written so that the left column is
  equivalent to $\osym$ of the right column; $\osym$ is defined in Proposition \ref{osymprop}.
\end{remark}

Let $u_1:= \a\b$ and $u_2 := \b\a$ denote the degree $2$ endomorphisms of
$\simp_1$ and $\simp_2$ respectively. Then the composition
$$m_2 : \Hom(X_1,X_2) \ott \Hom(X_0,X_1) \to \Hom(X_0,X_2),\conj{ } f\ott g \mapsto f\circ g$$
is determined by the requirements of the identity maps and the table below.
$$\begin{array}{llcll}
m_2(j_1,p_2) &= (21) &\quad  &  m_2(j_2,p_1) &= (12) \\
m_2(u_1^n,u_1^m) &= u_1^{n+m} &\quad  & m_2(u_2^n,u_2^m) &= u_2^{n+m}\\
m_2(\b u_1^m,u_1^n) &= \b u_1^{n+m} &\quad  & m_2(\a u_2^m,u_2^n) &= \a u_2^{n+m} \\
m_2(u_2^m, \b u_1^n) &= \b u_1^{n+m} &\quad & m_2(u_1^m, \a u_2^n) &= \a u_2^{n+m} \\
m_2(\a u_2^m, \b u_1^n) &= u_1^{n+m+1} &\quad  & m_2(\b u_1^m,\a u_2^n) &= u_2^{n+m+1}  \\
\end{array}$$

In other words, all of the compositions are zero besides those involving identity maps, $j_1p_2 = (21)$, $j_2p_1 = (12)$ and the maps $\a$ and $\b$ which generate a free subalgebra. Together with maps $u_1 = \a\b$ and $u_2 = \b\a$ there are relations
$$u_2\b = \b u_1 \conj{ and } u_1\a = \a u_2.$$


The generating set pictured above determines the basis for each $\Hom$-space  given below.
$$\begin{array}{lcl}
\Hom(\proj_1, \simp_1) = p_1 &\quad& \Hom(\proj_2, \simp_2) = p_2\\
\Hom(\simp_1,\proj_2) = j_2 &\quad& \Hom(\simp_2, \proj_1) = j_1\\
\Hom(\proj_1, \proj_2) = (12) &\quad& \Hom(\proj_2, \proj_1) = (21)\\
\Hom(\proj_1, \proj_1) = 1_{\proj_1} &\quad& \Hom(\proj_2, \proj_1) = 1_{\proj_2}\\
\Hom(\simp_1, \simp_1) = u_1^n &\quad& \Hom(\simp_2, \simp_2) = u_2^n\\
\Hom(\simp_1, \simp_2) = \b u_1^n &\quad& \Hom(\simp_2, \simp_1) = \a u_2^n\\
\end{array}$$
where $n\geq 0$.

When $d=3$, the $\Ainf$-multiplication map
$$m_3 : \Hom(X_2,X_3)\ott\Hom(X_1,X_2)\ott\Hom(X_0,X_1) \to \Hom(X_0,X_3)[-1]$$
is determined by the constraints of strict unitality (see \eqref{identityeqn}) and the behavior of certain triangles within the graph above under action of $m_2$. There are two basic triangles
\begin{equation}\label{basictri1eqn}
\begin{array}{clcl}
(A) & m_3(\a,p_2,j_2) = 1_{\simp_1} &\quad& m_3(\b,p_1,j_1) = 1_{\simp_2} \\
(B) & m_3(p_2,j_2,\a) = 1_{\simp_2} &\quad& m_3(p_1,j_1,\b) = 1_{\simp_1} \\
    & m_3(j_2,\a,p_2) = 1_{\proj_2} &\quad& m_3(j_1,\b,p_1) = 1_{\proj_1} 
\end{array}
\end{equation}
consisting of rotations of the upper and lower faces of the tetrahedron pictured above.

There is a trick to finding several other non-trivial $m_3$-products in $\PA$. They are implied by the $\Ainf$-relations and the basic triangles above. Ignoring signs for a moment, the first $\Ainf$-relation, $d=4$ in Eqn. \eqref{ainfrel}, to incorporate the $m_3$-operation is written in long form as follows:
\begin{align}\label{trickeqn}
  m_3(m_2(h,f_3),f_2,f_1) &+ m_3(h,m_2(f_3,f_2),f_1) + m_3(h,f_3,m_2(f_2,f_1))\\ & +  m_2(m_3(h,f_3,f_2),f_1) + m_2(h,m_3(f_3,f_2,f_1)) = 0.
\end{align}
So when all but the first and last terms in the sum vanish, each face $m_3(f_3,f_2,f_1) = g$ of the tetrahedron above gives rise to a number of other $m_3$-operations. These can be constructed by using 
non-trivial $m_2$-compositions on either the left
$$m_3(m_2(h,f_3),f_2,f_1) = m_2(h,m_3(f_3,f_2,f_1)) = m_2(h,g),$$
or, by symmetry, on the right
$$m_3(f_3,f_2,m_2(f_1,h)) = m_2(m_3(f_3,f_2,f_1),h) = m_2(g,h).$$

Using this trick, each case $(A)$ and $(B)$ above gives the three additional compositions below.
$$\begin{array}{cllcll}
(A) & m_3(\a,p_2,(12)) &= p_1 &\quad& m_3(\b,p_1,(21)) &= p_2 \\
    & m_3(u_2^n, p_2,j_2) &= \b u_1^{n-1} &\quad& m_3(u_1^n,p_1,j_1) &= \a u_2^{n-1} \\
    & m_3(\a u_2^n, p_2,j_2) &= u_1^n &\quad& m_3(\b u_1^n,p_1,j_1) &= u_2^n \\
(B) & m_3((21),j_2,\a) &= j_1 &\quad& m_3((12),j_1,\b) &= j_2 \\
    & m_3(p_2,j_2,u_1^n) &= \b u_1^{n-1} &\quad& m_3(p_1,j_1,u_2^n) &= \a u_2^{n-1} \\
    & m_3(p_2,j_2, \a u_2^n) &= u_2^n &\quad& m_3(p_1,j_1, \b u_1^{n}) &= u_1^n
\end{array}$$

When $d=4$, the $\Ainf$-multiplication map
$$m_4 : \Hom(X_3,X_4)\ott\Hom(X_2,X_3)\ott\Hom(X_1,X_2)\ott\Hom(X_0,X_1) \to \Hom(X_0,X_4)[-2]$$
is determined by the constraints of strict unitality (see Eqn. \eqref{identityeqn}) and the behavior of certain triangles within the graph above under action of $m_2$. The two basic operations below correspond to the left and right faces of the tetrahedron on the page pictured above.
\begin{equation}\label{basictri2eqn}
\begin{array}{clcl}
(A) & m_4(p_1,(21),j_2,u_1) = 1_{\simp_1}  &\quad& m_4(p_2,(12),j_1,u_2) = 1_{\simp_2} \\
(B) & m_4(u_1, p_1,(21),j_2) = 1_{\simp_1} &\quad& m_4(u_2,p_2,(12),j_1) = 1_{\simp_2} \\
    & m_4(j_2,u_1,p_1,(21)) = 1_{\proj_2}  &\quad& m_4(j_1,u_2,p_2,(12)) = 1_{\proj_1} \\
    & m_4((21),j_2,u_1,p_1) = 1_{\proj_1}  &\quad& m_4((12),j_1,u_2,p_2) = 1_{\proj_2} \\
\end{array}
\end{equation}
As explained for the $m_3$-operations above, due to the vanishing of some terms in the $\Ainf$-relation for the $m_4$-operation, we can act with the $m_2$-operation on either the left or the right of the basic triangles to obtain a few more $m_4$-operations. Each case, $(A)$ and $(B)$, determines three more $m_4$-operations, these are listed below.
$$\begin{array}{cllcll}
(A) & m_4(p_1,(21),j_2,u_1^{n+1}) &= u_1^n &\quad & m_4(p_2,(12),j_1,u_2^{n+1}) &= u_2^n\\
    & m_4(p_1,(21),j_2,\a u_2^{n+1}) &= \a u_2^n &\quad & m_4(p_2,(12),j_1, \b u_1^{n+1}) &= \b u_1^n\\
    & m_4((12),(21), j_2, u_1) &= j_2 &\quad& m_4((21),(12),j_1,u_2) &= j_1 \\
(B) & m_4(u_1^{n+1},p_1,(21),j_2) &= u_1^n &\quad& m_4(u_2^{n+1},p_2,(12),j_1) &= u_2^n \\
    & m_4( u_2^{n+1} \b,p_1,(21),j_2) &=  u_2^n \b  &\quad& m_4( u_1^{n+1} \a ,p_2,(12),j_1) &=  u_1^n \a \\
    & m_4(u_1,p_1,(21),(12)) &= p_1 &\quad& m_4(u_2,p_2,(12),(21)) &= p_2 \\
\end{array}$$

In order to facilitate our description of the rest of the $\Ainf$-structure, we will use the following notation
$$(212) = (21) \ott (12)\conj{and} (121) = (12) \ott (21).$$
For instance,
$$(212)^n = (21) \ott (12) \ott (21) \ott (12) \ott \cdots \ott (21) \ott (12) \conj{ $n$-times. }$$

The classification of higher homotopies (see \eqref{higherhs} later on) shows that the only non-zero higher operations must contain an expression of the form:
$$\begin{array}{lcl}
p_2 \ott (121)^n\ott j_2  & \quad & p_1 \ott (212)^n\ott j_1 \\
p_2 \ott (121)^{n+1} & \quad & p_1 \ott (212)^{n+1} \\
p_2 \ott (121)^n \ott (12) & \quad & p_1 \ott (212)^n \ott (21)\\
p_2 \ott (121)^n \ott (12) \ott j_1 & \quad & p_1 \ott (212)^n \ott (21) \ott j_2 \\
\end{array}$$

\begin{rmk}\label{rmkev}
Intuitively speaking, if we view the input of an $\Ainf$-operation $m_n(f_n,\ldots,f_1)$ as a path $f_1,\ldots,f_n$ in the graph featured at the beginning of \S\ref{picatsec} then the higher $\Ainf$-operations that we find can been seen as extensions of the lower order operations discussed above. Each of these extensions is formed by adding a loop of the form $(121)$ or $(212)$ to the path while balancing the grading by adding a loop of the form $u_1=\a\b$ or $u_2 = \b\a$. The equations above list the ways in which the loops $(121)$ or $(212)$ can be added. See Rmk. \ref{rmkev2}.
\end{rmk}

This classification result is accomplished by Thm. \ref{bigcomputationtheorem}, which is a computation using homotopy perturbation theory (Thm. \ref{HPthm}). We need to introduce a few more preliminaries before proceeding to the theorem.

Our goal in Thm. \ref{bigcomputationtheorem} is to compute all of the compositions of maps corresponding to the decorated binary trees discussed in \S \ref{HPTsec}.
The key is to construct homotopies "$h$-maps" for compositions which are nullhomotopic, and study compositions of these homotopies.
We now define certain important maps in $\End^*(\tM)$.  In what follows below, $i \in \mathbb{Z}/2$.

Below is the map $h_{\simp_i \simp_i}^{(n)} \colon \simp_i \rightarrow \simp_i [-2n]$.
$$\begin{tikzpicture}[scale=10, node distance=1.5cm]
\node(A0) {$\proj_i$};
\node(A1)[left of=A0] {$\proj_{i+1}$};
\node(A2)[left of=A1] {$\proj_i$};
\node(A3)[left of=A2] {$\cdots$};
\draw[->] (A1) to node {} (A0);
\draw[->] (A2) to node {} (A1);
\draw[->] (A3) to node {} (A2);
\node(B0)[below of=A0] {$\proj_i$};
\node(B1)[below of=A1] {$\proj_{i+1}$};
\node(B2)[below of=A2] {$\proj_i$};
\node(B3)[below of=A3] {$\cdots$};

\draw[->] (B1) to node {} (B0);
\draw[->] (B2) to node {} (B1);
\draw[->] (B3) to node {} (B2);

\draw[->] (A0) to node {$1$} (B0);
\draw[->] (A1) to node {$1$} (B1);
\draw[->] (A2) to node {$1$} (B2);

\node(C1)[right of=B0] {$\proj_{i+1}$};
\node(C2)[right of=C1] {$\cdots$};
\node(C3)[right of=C2] {$\proj_{i+1}$};
\node(C4)[right of=C3] {$\proj_i$};

\draw[->] (B0) to node {} (C1);
\draw[->] (C1) to node {} (C2);
\draw[->] (C2) to node {} (C3);
\draw[->] (C3) to node {} (C4);

\end{tikzpicture}$$

Below is the map $h_{\simp_i \simp_{i+1}}^{(n)} \colon \simp_{i+1} \rightarrow \simp_i [-2n-1]$.
$$\begin{tikzpicture}[scale=10, node distance=1.5cm]
\node(A0) {$\proj_{i+1}$};
\node(A1)[left of=A0] {$\proj_i$};
\node(A2)[left of=A1] {$\proj_{i+1}$};
\node(A3)[left of=A2] {$\cdots$};
\draw[->] (A1) to node {} (A0);
\draw[->] (A2) to node {} (A1);
\draw[->] (A3) to node {} (A2);
\node(B0)[below of=A0] {$\proj_{i+1}$};
\node(B1)[below of=A1] {$\proj_i$};
\node(B2)[below of=A2] {$\proj_{i+1}$};
\node(B3)[below of=A3] {$\cdots$};

\draw[->] (B1) to node {} (B0);
\draw[->] (B2) to node {} (B1);
\draw[->] (B3) to node {} (B2);

\draw[->] (A0) to node {$1$} (B0);
\draw[->] (A1) to node {$1$} (B1);
\draw[->] (A2) to node {$1$} (B2);

\node(C1)[right of=B0] {$\proj_i$};
\node(C2)[right of=C1] {$\cdots$};
\node(C3)[right of=C2] {$\proj_{i+1}$};
\node(C4)[right of=C3] {$\proj_i$};

\draw[->] (B0) to node {} (C1);
\draw[->] (C1) to node {} (C2);
\draw[->] (C2) to node {} (C3);
\draw[->] (C3) to node {} (C4);

\end{tikzpicture}$$

Below is the map $h_{\simp_i \proj_i}^{(n)} \colon \proj_i \rightarrow \simp_i [-2n]$.
$$\begin{tikzpicture}[scale=10, node distance=1.5cm]
\node(A0){$\proj_i$};
\node(A1)[left of=A0]{$\cdots$};
\node(A2)[left of=A1]{$\proj_i$};
\node(A3)[left of=A2]{$\proj_{i+1}$};
\node(A4)[left of=A3]{$\proj_i$};
\node(A5)[left of=A4]{$\proj_{i+1}$};
\node(A6)[left of=A5]{$\cdots$};
\node(B4)[above of=A4]{$\proj_i$};

\draw[->] (A1) to node {} (A0);
\draw[->] (A2) to node {} (A1);
\draw[->] (A3) to node {} (A2);
\draw[->] (A4) to node {} (A3);
\draw[->] (A5) to node {} (A4);
\draw[->] (A6) to node {} (A5);
\draw[->] (B4) to node {$1$} (A4);

\end{tikzpicture}$$

Below is the map $h_{\simp_i \proj_{i+1}}^{(n)} \colon \proj_{i+1} \rightarrow \simp_i [-2n-1]$.
$$\begin{tikzpicture}[scale=10, node distance=1.5cm]
\node(A0){$\proj_i$};
\node(A1)[left of=A0]{$\cdots$};
\node(A2)[left of=A1]{$\proj_{i+1}$};
\node(A3)[left of=A2]{$\proj_i$};
\node(A4)[left of=A3]{$\proj_{i+1}$};
\node(A5)[left of=A4]{$\proj_i$};
\node(A6)[left of=A5]{$\cdots$};
\node(B4)[above of=A4]{$\proj_{i+1}$};

\draw[->] (A1) to node {} (A0);
\draw[->] (A2) to node {} (A1);
\draw[->] (A3) to node {} (A2);
\draw[->] (A4) to node {} (A3);
\draw[->] (A5) to node {} (A4);
\draw[->] (A6) to node {} (A5);
\draw[->] (B4) to node {$1$} (A4);

\end{tikzpicture}$$

Below is the map $h_{\proj_i \simp_i}^{(n)} \colon \simp_i \rightarrow \proj_i[2n] $.
$$\begin{tikzpicture}[scale=10, node distance=1.5cm]
\node(C0){$\cdots$};
\node(C1)[right of=C0]{$\proj_i$};
\node(C2)[right of=C1]{$\proj_{i+1}$};
\node(C3)[right of=C2]{$\proj_i$};
\node(C4)[right of=C3]{$\proj_{i+1}$};
\node(C5)[right of=C4]{$\cdots$};
\node(C6)[right of=C5]{$\proj_{i}$};
\node(B3)[below of=C3]{$\proj_i$};

\draw[->] (C0) to node {} (C1);
\draw[->] (C1) to node {} (C2);
\draw[->] (C2) to node {} (C3);
\draw[->] (C3) to node {$1$} (B3);
\draw[->] (C3) to node {} (C4);
\draw[->] (C4) to node {} (C5);
\draw[->] (C5) to node {} (C6);

\end{tikzpicture}$$

Below is the map $h_{\proj_{i+1} \simp_i}^{(n)} \colon \simp_i \rightarrow \proj_{i+1}[2n+1] $.
$$\begin{tikzpicture}[scale=10, node distance=1.5cm]
\node(C0){$\cdots$};
\node(C1)[right of=C0]{$\proj_i$};
\node(C2)[right of=C1]{$\proj_{i+1}$};
\node(C3)[right of=C2]{$\proj_i$};
\node(C4)[right of=C3]{$\proj_{i+1}$};
\node(C5)[right of=C4]{$\cdots$};
\node(C6)[right of=C5]{$\proj_{i}$};
\node(B4)[below of=C4]{$\proj_{i+1}$};

\draw[->] (C0) to node {} (C1);
\draw[->] (C1) to node {} (C2);
\draw[->] (C2) to node {} (C3);
\draw[->] (C4) to node {$1$} (B4);
\draw[->] (C3) to node {} (C4);
\draw[->] (C4) to node {} (C5);
\draw[->] (C5) to node {} (C6);

\end{tikzpicture}$$

The trees appearing in \eqref{trees} propagate from leaves to root.  The homotopies appear in order determined by distance from the leaves.  The initial homotopies $H$ arise as follows. 

\begin{align}
\label{initialhs}
H(p_1 j_1) &= h_{\simp_1 \simp_2}^{(0)} &\quad  H(p_2 j_2) &= h_{\simp_2 \simp_1}^{(0)}\\ \nonumber
H(j_1 (\beta u_1^n) ) &= h_{\proj_1 \simp_1}^{(n)} &\quad H(j_2 (\alpha u_2^n)) &= h_{\proj_2 \simp_2}^{(n)} \\ \nonumber
H(p_1(21)) &= h_{\simp_1 \proj_2}^{(0)} &\quad H(p_2(12)) &= h_{\simp_2 \proj_1}^{(0)} \nonumber \\
H(j_2 u_1^n) &= h_{\proj_2 \simp_1}^{(n-1)},  \quad n>0 &\quad  H(j_1 u_2^n) &= h_{\proj_1 \simp_2}^{(n-1)}, \quad n>0 \nonumber
\end{align}

"Higher" $h$-maps arise by applying $H$ to these initial $h$ maps as follows.
\begin{align}
\label{higherhs}
H(h_{\simp_1 \proj_2}^{(n)} \circ j_2) &= h_{\simp_1 \simp_1}^{(n+1)}, \quad n \geq 0 &\quad
H(h_{\simp_2 \proj_1}^{(n)} \circ j_1) &= h_{\simp_2 \simp_2}^{(n+1)}, \quad n \geq 0\\ \nonumber
H(h_{\simp_1 \proj_2}^{(n)} \circ (12)) &= h_{\simp_1 \proj_1}^{(n+1)}, \quad n \geq 0 &\quad
H(h_{\simp_2 \proj_1}^{(n)} \circ (21)) &= h_{\simp_2 \proj_2}^{(n+1)}, \quad n \geq 0\\ \nonumber
H(h_{\simp_1 \proj_1}^{(n)} \circ j_1) &= h_{\simp_1 \simp_2}^{(n)}, \quad n \geq 1 &\quad
H(h_{\simp_2 \proj_2}^{(n)} \circ j_2) &= h_{\simp_2 \simp_1}^{(n)}, \quad n \geq 1\\ \nonumber
H(h_{\simp_1 \proj_1}^{(n)} \circ (21)) &= h_{\simp_1 \proj_2}^{(n)}, \quad n \geq 1 &\quad
H(h_{\simp_2 \proj_2}^{(n)} \circ (12)) &= h_{\simp_2 \proj_1}^{(n)}, \quad n \geq 1 \\ \nonumber
H((12) \circ h_{\proj_1 \simp_1}^{(n)}) &= h_{\proj_2 \simp_1}^{(n-1)}, \quad n \geq 1 &\quad
H((21) \circ h_{\proj_2 \simp_2}^{(n)}) &= h_{\proj_1 \simp_2}^{(n-1)}, \quad n \geq 1 \\ \nonumber
H((21) \circ h_{\proj_2 \simp_1}^{(n)}) &= h_{\proj_1 \simp_1}^{(n)}, \quad n \geq 0 &\quad
H((12) \circ h_{\proj_1 \simp_2}^{(n)}) &= h_{\proj_2 \simp_2}^{(n)}, \quad n \geq 0.
\end{align}
Compositions of $h$'s with elements in the $Ext$-algebra and other $h$'s produce the elements in the $Ext$-algebra listed below. The map $H$ is zero on any element $f$ representing a cycle in the $Ext$-algebra.
\begin{align}
\label{hstoext}
u_1^n \circ h_{\simp_1 \simp_2}^{(m)} &= u_1^{n-m-1} \alpha &\quad
u_2^n \circ h_{\simp_2 \simp_1}^{(m)} &= u_2^{n-m-1} \beta \\ \nonumber
u_2^n \beta \circ h_{\simp_1 \simp_2}^{(m)} &= u_2^{n-m}  &\quad
u_1^n \alpha \circ h_{\simp_2 \simp_1}^{(m)} &= u_1^{n-m}  \\ \nonumber
h_{\proj_1 \simp_1}^{(0)} \circ p_1 &= 1_{\proj_1} &\quad
h_{\proj_2 \simp_2}^{(0)} \circ p_2 &= 1_{\proj_2} \\ \nonumber
(12) \circ h_{\proj_1 \simp_1}^{(0)} &= j_2 &\quad
(21) \circ h_{\proj_2 \simp_2}^{(0)} &= j_1 \\ \nonumber
u_2^m \beta \circ h_{\simp_1 \proj_2}^{(n)} &=\delta_{m,n} p_2 &\quad
u_1^m \alpha \circ h_{\simp_2 \proj_1}^{(n)} &=\delta_{m,n} p_1 \\ \nonumber
u_1^n \circ h_{\simp_1 \simp_1}^{(m)} &= u_1^{n-m} &\quad
u_2^n \circ h_{\simp_2 \simp_2}^{(m)} &= u_2^{n-m} \\ \nonumber
u_1^n \circ h_{\simp_1 \proj_1}^{(m)} &= \delta_{n,m} p_1 &\quad
u_2^n \circ h_{\simp_2 \proj_2}^{(m)} &= \delta_{n,m} p_2 \\ \nonumber
u_2^k \beta \circ h_{\simp_1 \simp_1}^{(n)} &= u_2^{k-n} \beta &\quad
u_1^k \alpha \circ h_{\simp_2 \simp_2}^{(n)} &= u_1^{k-n} \alpha \\ \nonumber
h_{\proj_2 \simp_1}^{(m)} \circ h_{\simp_1 \proj_2}^{(n)} &=\delta_{m,n} 1_{\proj_2} &\quad
h_{\proj_1 \simp_2}^{(m)} \circ h_{\simp_2 \proj_1}^{(n)} &=\delta_{m,n} 1_{\proj_1} \\ \nonumber
h_{\proj_1 \simp_1}^{(m)} \circ h_{\simp_1 \proj_1}^{(n)} &=\delta_{m,n} 1_{\proj_1} &\quad
h_{\proj_2 \simp_2}^{(m)} \circ h_{\simp_2 \proj_2}^{(n)} &=\delta_{m,n} 1_{\proj_2}.
\end{align}
We also have the following list of "partial" terminating operations.
Let $\gamma_1=\a$ and $\gamma_2=\b$ and let $i \in \mathbb{Z}/2$. These compositions are not cycles, so the map $H$ is defined to be zero on them.

Below is the map $h_{\simp_i \simp_i}^{(n)} \circ u_i^k$.
$$\begin{tikzpicture}[scale=10, node distance=1.5cm]
\node(A0) {$\proj_i$};
\node(A1)[left of=A0] {$\proj_{i+1}$};
\node(A2)[left of=A1] {$\proj_i$};
\node(A3)[left of=A2] {$\cdots$};
\node(A4)[right of=A0] {$\cdots$};
\node(A5)[right of=A4] {$\proj_i$};
\draw[->] (A1) to node {} (A0);
\draw[->] (A2) to node {} (A1);
\draw[->] (A3) to node {} (A2);
\draw[->] (A0) to node {} (A4);
\draw[->] (A4) to node {} (A5);

\node(B0)[below of=A0] {$\proj_i$};
\node(B1)[below of=A1] {$\proj_{i+1}$};
\node(B2)[below of=A2] {$\proj_i$};
\node(B3)[below of=A3] {$\cdots$};

\draw[->] (B1) to node {} (B0);
\draw[->] (B2) to node {} (B1);
\draw[->] (B3) to node {} (B2);

\draw[->] (A0) to node {$1$} (B0);
\draw[->] (A1) to node {$1$} (B1);
\draw[->] (A2) to node {$1$} (B2);

\node(C1)[right of=B0] {$\proj_{i+1}$};
\node(C2)[right of=C1] {$\cdots$};
\node(C3)[right of=C2] {$\proj_{i+1}$};
\node(C4)[right of=C3] {$\proj_i$};

\draw[->] (B0) to node {} (C1);
\draw[->] (C1) to node {} (C2);
\draw[->] (C2) to node {} (C3);
\draw[->] (C3) to node {} (C4);
\end{tikzpicture}$$

Below is the map $h_{\simp_i \simp_i}^{(n)} \circ u_i^k \gamma_i$.
$$\begin{tikzpicture}[scale=10, node distance=1.5cm]
\node(A0) {$\proj_i$};
\node(A1)[left of=A0] {$\proj_{i+1}$};
\node(A2)[left of=A1] {$\proj_i$};
\node(A3)[left of=A2] {$\cdots$};
\node(A4)[right of=A0] {$\cdots$};
\node(A5)[right of=A4] {$\proj_{i+1}$};
\draw[->] (A1) to node {} (A0);
\draw[->] (A2) to node {} (A1);
\draw[->] (A3) to node {} (A2);
\draw[->] (A0) to node {} (A4);
\draw[->] (A4) to node {} (A5);

\node(B0)[below of=A0] {$\proj_i$};
\node(B1)[below of=A1] {$\proj_{i+1}$};
\node(B2)[below of=A2] {$\proj_i$};
\node(B3)[below of=A3] {$\cdots$};

\draw[->] (B1) to node {} (B0);
\draw[->] (B2) to node {} (B1);
\draw[->] (B3) to node {} (B2);

\draw[->] (A0) to node {$1$} (B0);
\draw[->] (A1) to node {$1$} (B1);
\draw[->] (A2) to node {$1$} (B2);

\node(C1)[right of=B0] {$\proj_{i+1}$};
\node(C2)[right of=C1] {$\cdots$};
\node(C3)[right of=C2] {$\proj_{i+1}$};
\node(C4)[right of=C3] {$\proj_i$};

\draw[->] (B0) to node {} (C1);
\draw[->] (C1) to node {} (C2);
\draw[->] (C2) to node {} (C3);
\draw[->] (C3) to node {} (C4);

\end{tikzpicture}$$

Below is the map $h_{\simp_i \simp_{i+1}}^{(n)} \circ u_{i+1}^k $.
$$\begin{tikzpicture}[scale=10, node distance=1.5cm]
\node(A0) {$\proj_{i+1}$};
\node(A1)[left of=A0] {$\proj_{i}$};
\node(A2)[left of=A1] {$\proj_{i+1}$};
\node(A3)[left of=A2] {$\cdots$};
\node(A4)[right of=A0] {$\cdots$};
\node(A5)[right of=A4] {$\proj_{i+1}$};
\draw[->] (A1) to node {} (A0);
\draw[->] (A2) to node {} (A1);
\draw[->] (A3) to node {} (A2);
\draw[->] (A0) to node {} (A4);
\draw[->] (A4) to node {} (A5);

\node(B0)[below of=A0] {$\proj_{i+1}$};
\node(B1)[below of=A1] {$\proj_{i}$};
\node(B2)[below of=A2] {$\proj_{i+1}$};
\node(B3)[below of=A3] {$\cdots$};

\draw[->] (B1) to node {} (B0);
\draw[->] (B2) to node {} (B1);
\draw[->] (B3) to node {} (B2);

\draw[->] (A0) to node {$1$} (B0);
\draw[->] (A1) to node {$1$} (B1);
\draw[->] (A2) to node {$1$} (B2);

\node(C1)[right of=B0] {$\proj_{i}$};
\node(C2)[right of=C1] {$\cdots$};
\node(C3)[right of=C2] {$\proj_{i+1}$};
\node(C4)[right of=C3] {$\proj_i$};

\draw[->] (B0) to node {} (C1);
\draw[->] (C1) to node {} (C2);
\draw[->] (C2) to node {} (C3);
\draw[->] (C3) to node {} (C4);
\end{tikzpicture}$$

Below is the map $h_{\simp_i \simp_{i+1}}^{(n)} \circ u_{i+1}^k \gamma_{i+1}$.
$$\begin{tikzpicture}[scale=10, node distance=1.5cm]
\node(A0) {$\proj_{i+1}$};
\node(A1)[left of=A0] {$\proj_{i}$};
\node(A2)[left of=A1] {$\proj_{i+1}$};
\node(A3)[left of=A2] {$\cdots$};
\node(A4)[right of=A0] {$\cdots$};
\node(A5)[right of=A4] {$\proj_{i}$};
\draw[->] (A1) to node {} (A0);
\draw[->] (A2) to node {} (A1);
\draw[->] (A3) to node {} (A2);
\draw[->] (A0) to node {} (A4);
\draw[->] (A4) to node {} (A5);

\node(B0)[below of=A0] {$\proj_{i+1}$};
\node(B1)[below of=A1] {$\proj_{i}$};
\node(B2)[below of=A2] {$\proj_{i+1}$};
\node(B3)[below of=A3] {$\cdots$};

\draw[->] (B1) to node {} (B0);
\draw[->] (B2) to node {} (B1);
\draw[->] (B3) to node {} (B2);

\draw[->] (A0) to node {$1$} (B0);
\draw[->] (A1) to node {$1$} (B1);
\draw[->] (A2) to node {$1$} (B2);

\node(C1)[right of=B0] {$\proj_{i}$};
\node(C2)[right of=C1] {$\cdots$};
\node(C3)[right of=C2] {$\proj_{i}$};
\node(C4)[right of=C3] {$\proj_{i}$};

\draw[->] (B0) to node {} (C1);
\draw[->] (C1) to node {} (C2);
\draw[->] (C2) to node {} (C3);
\draw[->] (C3) to node {} (C4);

\end{tikzpicture}$$

Below is the map $h_{\simp_i \proj_{i+1}}^{(n)} \circ h_{\proj_{i+1} \simp_i}^{(k)}$.
$$\begin{tikzpicture}[scale=10, node distance=1.5cm]
\node(A0) {$\proj_{i+1}$};
\node(A1)[left of=A0] {$\proj_{i}$};
\node(A2)[left of=A1] {$\proj_{i+1}$};
\node(A3)[left of=A2] {$\cdots$};
\node(A4)[right of=A0] {$\cdots$};
\node(A5)[right of=A4] {$\proj_{i}$};
\draw[->] (A1) to node {} (A0);
\draw[->] (A2) to node {} (A1);
\draw[->] (A3) to node {} (A2);
\draw[->] (A0) to node {} (A4);
\draw[->] (A4) to node {} (A5);

\node(B0)[below of=A0] {$\proj_{i+1}$};
\node(B1)[below of=A1] {$\proj_{i}$};
\node(B2)[below of=A2] {$\proj_{i+1}$};
\node(B3)[below of=A3] {$\cdots$};

\draw[->] (B1) to node {} (B0);
\draw[->] (B2) to node {} (B1);
\draw[->] (B3) to node {} (B2);

\draw[->] (A0) to node {$1$} (B0);

\node(C1)[right of=B0] {$\proj_{i}$};
\node(C2)[right of=C1] {$\cdots$};
\node(C3)[right of=C2] {$\proj_{i+1}$};
\node(C4)[right of=C3] {$\proj_{i}$};

\draw[->] (B0) to node {} (C1);
\draw[->] (C1) to node {} (C2);
\draw[->] (C2) to node {} (C3);
\draw[->] (C3) to node {} (C4);

\end{tikzpicture}$$

Below is the map $h_{\simp_i \proj_{i}}^{(n)} \circ h_{\proj_{i} \simp_i}^{(k)}$.
$$\begin{tikzpicture}[scale=10, node distance=1.5cm]
\node(A0) {$\proj_{i}$};
\node(A1)[left of=A0] {$\proj_{i+1}$};
\node(A2)[left of=A1] {$\proj_{i}$};
\node(A3)[left of=A2] {$\cdots$};
\node(A4)[right of=A0] {$\cdots$};
\node(A5)[right of=A4] {$\proj_{i}$};
\draw[->] (A1) to node {} (A0);
\draw[->] (A2) to node {} (A1);
\draw[->] (A3) to node {} (A2);
\draw[->] (A0) to node {} (A4);
\draw[->] (A4) to node {} (A5);

\node(B0)[below of=A0] {$\proj_{i}$};
\node(B1)[below of=A1] {$\proj_{i+1}$};
\node(B2)[below of=A2] {$\proj_{i}$};
\node(B3)[below of=A3] {$\cdots$};

\draw[->] (B1) to node {} (B0);
\draw[->] (B2) to node {} (B1);
\draw[->] (B3) to node {} (B2);

\draw[->] (A0) to node {$1$} (B0);

\node(C1)[right of=B0] {$\proj_{i+1}$};
\node(C2)[right of=C1] {$\cdots$};
\node(C3)[right of=C2] {$\proj_{i+1}$};
\node(C4)[right of=C3] {$\proj_{i}$};

\draw[->] (B0) to node {} (C1);
\draw[->] (C1) to node {} (C2);
\draw[->] (C2) to node {} (C3);
\draw[->] (C3) to node {} (C4);

\end{tikzpicture}$$

Below is the map $p_i \circ h_{\proj_{i} \simp_i}^{(k)}$.
$$\begin{tikzpicture}[scale=10, node distance=1.5cm]
\node(A0) {$\proj_{i}$};
\node(A1)[left of=A0] {$\proj_{i+1}$};
\node(A2)[left of=A1] {$\proj_{i}$};
\node(A3)[left of=A2] {$\cdots$};
\node(A4)[right of=A0] {$\cdots$};
\node(A5)[right of=A4] {$\proj_{i}$};
\draw[->] (A1) to node {} (A0);
\draw[->] (A2) to node {} (A1);
\draw[->] (A3) to node {} (A2);
\draw[->] (A0) to node {} (A4);
\draw[->] (A4) to node {} (A5);

\node(B0)[below of=A0] {$\proj_{i}$};
\node(B1)[below of=A1] {$\proj_{i+1}$};
\node(B2)[below of=A2] {$\proj_{i}$};
\node(B3)[below of=A3] {$\cdots$};

\draw[->] (B1) to node {} (B0);
\draw[->] (B2) to node {} (B1);
\draw[->] (B3) to node {} (B2);

\draw[->] (A0) to node {$1$} (B0);



\end{tikzpicture}$$

Below is the map $h_{\simp_{i+1} \proj_{i+1}}^{(n)} \circ h_{\proj_{i+1} \simp_i}^{(k)}$.
$$\begin{tikzpicture}[scale=10, node distance=1.5cm]
\node(A0) {$\proj_{i+1}$};
\node(A1)[left of=A0] {$\proj_{i}$};
\node(A2)[left of=A1] {$\proj_{i+1}$};
\node(A3)[left of=A2] {$\cdots$};
\node(A4)[right of=A0] {$\cdots$};
\node(A5)[right of=A4] {$\proj_{i}$};
\draw[->] (A1) to node {} (A0);
\draw[->] (A2) to node {} (A1);
\draw[->] (A3) to node {} (A2);
\draw[->] (A0) to node {} (A4);
\draw[->] (A4) to node {} (A5);

\node(B0)[below of=A0] {$\proj_{i+1}$};
\node(B1)[below of=A1] {$\proj_{i}$};
\node(B2)[below of=A2] {$\proj_{i+1}$};
\node(B3)[below of=A3] {$\cdots$};

\draw[->] (B1) to node {} (B0);
\draw[->] (B2) to node {} (B1);
\draw[->] (B3) to node {} (B2);

\draw[->] (A0) to node {$1$} (B0);

\node(C1)[right of=B0] {$\proj_{i}$};
\node(C2)[right of=C1] {$\cdots$};
\node(C3)[right of=C2] {$\proj_{i}$};
\node(C4)[right of=C3] {$\proj_{i+1}$};

\draw[->] (B0) to node {} (C1);
\draw[->] (C1) to node {} (C2);
\draw[->] (C2) to node {} (C3);
\draw[->] (C3) to node {} (C4);

\end{tikzpicture}$$

Below is the map $p_{i+1} \circ h_{\proj_{i+1} \simp_i}^{(k)}$.
$$\begin{tikzpicture}[scale=10, node distance=1.5cm]
\node(A0) {$\proj_{i+1}$};
\node(A1)[left of=A0] {$\proj_{i}$};
\node(A2)[left of=A1] {$\proj_{i+1}$};
\node(A3)[left of=A2] {$\cdots$};
\node(A4)[right of=A0] {$\cdots$};
\node(A5)[right of=A4] {$\proj_{i}$};
\draw[->] (A1) to node {} (A0);
\draw[->] (A2) to node {} (A1);
\draw[->] (A3) to node {} (A2);
\draw[->] (A0) to node {} (A4);
\draw[->] (A4) to node {} (A5);

\node(B0)[below of=A0] {$\proj_{i+1}$};
\node(B1)[below of=A1] {$\proj_{i}$};
\node(B2)[below of=A2] {$\proj_{i+1}$};
\node(B3)[below of=A3] {$\cdots$};

\draw[->] (B1) to node {} (B0);
\draw[->] (B2) to node {} (B1);
\draw[->] (B3) to node {} (B2);

\draw[->] (A0) to node {$1$} (B0);



\end{tikzpicture}$$

Below is the map $h_{\simp_i \proj_{i+1}}^{(n)} \circ h_{\proj_{i+1} \simp_{i+1}}^{(k)}$.
$$\begin{tikzpicture}[scale=10, node distance=1.5cm]
\node(A0) {$\proj_{i+1}$};
\node(A1)[left of=A0] {$\proj_{i}$};
\node(A2)[left of=A1] {$\proj_{i+1}$};
\node(A3)[left of=A2] {$\cdots$};
\node(A4)[right of=A0] {$\cdots$};
\node(A5)[right of=A4] {$\proj_{i+1}$};
\draw[->] (A1) to node {} (A0);
\draw[->] (A2) to node {} (A1);
\draw[->] (A3) to node {} (A2);
\draw[->] (A0) to node {} (A4);
\draw[->] (A4) to node {} (A5);

\node(B0)[below of=A0] {$\proj_{i+1}$};
\node(B1)[below of=A1] {$\proj_{i}$};
\node(B2)[below of=A2] {$\proj_{i+1}$};
\node(B3)[below of=A3] {$\cdots$};

\draw[->] (B1) to node {} (B0);
\draw[->] (B2) to node {} (B1);
\draw[->] (B3) to node {} (B2);

\draw[->] (A0) to node {$1$} (B0);

\node(C1)[right of=B0] {$\proj_{i+1}$};
\node(C2)[right of=C1] {$\cdots$};
\node(C3)[right of=C2] {$\proj_{i+1}$};
\node(C4)[right of=C3] {$\proj_{i}$};

\draw[->] (B0) to node {} (C1);
\draw[->] (C1) to node {} (C2);
\draw[->] (C2) to node {} (C3);
\draw[->] (C3) to node {} (C4);

\end{tikzpicture}$$

\begin{theorem}\label{bigcomputationtheorem}
All of the non-trivial operations for $\PA$ are given below
along with their counterparts from the symmetry $\kappa$ coming from exchanging the nodes $1$ and $2$ in the underlying quiver.  

This list contains the $m_2$ operations.
$$\begin{array}{llcll}
m_2(j_1,p_2) = (21) 
\\
m_2(u_1^n,u_1^m) = u_1^{n+m} 
\\
m_2(\b u_1^m,u_1^n) = \b u_1^{n+m} 
\\
m_2(u_2^m, \b u_1^n) = \b u_1^{n+m} 
\\
m_2(\a u_2^m, \b u_1^n) = u_1^{n+m+1} 
\end{array}$$

This list contains the higher operations.
$$\begin{array}{lcl}
m_{2n+2k+5}((12),(212)^k,j_1,\b u_1^{n+k+1},p_1,(212)^n, (21)) =1_{\proj_2}  \\
m_{2n+3}((12),(212)^n,j_1, \b u_1^n)=j_2 \\
m_{2n+3}((212)^k, j_1, \b u_1^n, p_1, (212)^{n-k})=1_{\proj_1} \\
m_{2n+3}(u_1^k, p_1, (212)^{n},j_1)= u_1^{k-n-1} \a \\
m_{2n+3}(u_2^k \b, p_1, (212)^n,j_1) = u_2^{k-n} \\
m_{2n+4}(u_1^k, p_1, (212)^n, (21), j_2) = u_1^{k-n-1} \\
m_{2n+4}(p_1, (212)^n, (21), j_2, u_1^k) = u_1^{k-n-1} \\
m_{2n+4}(u_2^k \b, p_1, (212)^n, (21), j_2) = u_2^{k-n-1} \b \\
m_{2n+3}(u_2^n \b, p_1, (212)^n, (21)) = p_2 \\
m_{2n+2}(u_1^n, p_1, (212)^n)=p_1 \\
m_{2k+2n+4}((121)^k, j_2, u_1^{n+k+1}, p_1, (212)^n, (21))=1_{\proj_2} \\
m_{2n+4}((121)^{n+1}, j_2, u_1^{n+1})=j_2 \\
m_{2n+4}((212)^k, (21), j_2, u_1^{n+1}, p_1, (212)^{n-k})=1_{\proj_1} \\
m_{2m+3}(p_2,(121)^m,j_2,\a u_2^n) = u_2^{n-m} \\
m_{2m+3}(p_2,(121)^m,j_2, u_1^n) = \b u_1^{n-(m+1)} \\
 m_{2m+4}(p_1,(21),(121)^m,j_2,\a u_2^{n+1}) = \a u_2^{n-m}.
\end{array}$$

\end{theorem}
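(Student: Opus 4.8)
The proof is an application of homotopy perturbation theory (Theorem \ref{HPthm}) to the dg algebra $\End^*(\tM)$ in the tree form $m_n=\sum_T m_n^T$ recorded around diagram \eqref{perturbdiag}, where $T$ runs over planar rooted binary trees with $n$ leaves, the leaves are labeled $i$, branch points $\mu$, internal edges $H$, and the root $p$. The plan is: (i) fix an explicit contraction $(i,p,H)$ of $\End^*(\tM)$ onto $\Ext^*(M)$; (ii) identify the homotopy $H$ with the family of chain-level ``$h$-maps'' drawn above and prove this family is closed under the operations ``multiply, then apply $H$''; (iii) use the resulting vanishing of $H$ on most partial products to prune the tree sum to a handful of comb-type trees; (iv) propagate the recursions from leaves to root and match the output against the displayed table.

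First I would write $i$, $p$, $H$ on a basis. The complex $\End^*(\tM)$ decomposes into blocks $\Hom^\bullet(\tX,\tY)$ for $\tX,\tY\in\{\tsimp_1,\tproj_1,\tproj_2,\tsimp_2\}$, each built from copies of $\Hom(\proj_a,\proj_b)$ arranged along the periodic resolutions $\tsimp_i=[\cdots\to\proj_1\xto{(12)}\proj_2\xto{(21)}\proj_1\to\cdots\to\proj_i]$; it is this periodicity (the simples have period-$2$ resolutions over $\Pi A_2$) that produces the infinite families of operations. For each block I choose $i$ to send the $\Ext$-basis element ($1_{\proj_j}$, $p_i$, $j_i$, $(12)$, $(21)$, $u_i^n$, $\b u_1^n$, $\a u_2^n$, as listed just before Theorem \ref{bigcomputationtheorem}) to the evident cycle representative, $p$ to the dual projection, and $H$ to the explicit contracting homotopy pictured in the diagrams for $h^{(n)}_{\simp_i\simp_i}$, $h^{(n)}_{\simp_i\simp_{i+1}}$, $h^{(n)}_{\simp_i\proj_i}$, $h^{(n)}_{\simp_i\proj_{i+1}}$, $h^{(n)}_{\proj_i\simp_i}$, $h^{(n)}_{\proj_{i+1}\simp_i}$, declaring $H=0$ on the $i$-image and on the ``partial'' compositions drawn afterward. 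One then verifies $pi=1$, $1-ip=d(H)$, $H^2=0$ block by block; since the differential of $\tM$ is built only from $(12)$ and $(21)$, these are local computations using $(12)(21)=0=(21)(12)$ in $\Pi A_2$.

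The heart of the argument is the closure statement: the collection \eqref{initialhs}--\eqref{higherhs}, together with \eqref{hstoext} and the ``partial'' diagrams, is exactly the set of maps obtained from the $\Ext$-generators by alternately multiplying (by $\mu$) and applying $H$, and $H$ annihilates every cycle outside $\Ext^*(M)$ produced this way while the non-cycles left over are precisely the partials. I would prove this by induction on homological degree: given $h$ in the list, multiply it (on the right, then on the left, then by another $h$) by each generator, reduce the composite in $\End^*(\tM)$ using $(12)(21)=0=(21)(12)$, and observe that the result is either zero, a cycle landing back in $\Ext^*(M)$ (the right-hand entries of \eqref{hstoext}), a ``partial'' on which $H=0$ by fiat, or a new boundary whose $H$-image is again in the list (the entries of \eqref{higherhs}). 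This is exactly the loop-insertion picture of Remark \ref{rmkev}: a surviving word can only grow by splicing in a $(121)$ or $(212)$ while absorbing a $u_1$ or $u_2$, which is why the eventual operations are indexed by the two integers $n,k$, and why the only ``skeletons'' that appear are the four words $p_2\ott(121)^n\ott j_2$, $p_2\ott(121)^{n+1}$, $p_2\ott(121)^n\ott(12)$, $p_2\ott(121)^n\ott(12)\ott j_1$ listed before Remark \ref{rmkev} and their $\kappa$-images.

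With closure in hand the tree sum collapses: every internal edge of $m_n^T$ carries an $H$, so $T$ contributes $0$ as soon as any sub-product below an internal edge is a cycle or falls outside the closed family, which leaves only left-combs, right-combs, and the few ``two-cusp'' trees matching the skeletons above. For each surviving shape I would run the recursion from the leaves, apply $p$ at the root, and read off the corresponding line of the table; the involution $\kappa$ of Proposition \ref{osymprop} reduces everything to the left-hand column. The main obstacle I expect is the sign bookkeeping: each branch point contributes Koszul signs from the $[2-d]$ shifts in Definition \ref{ainfdef} and from $|\a|=|\b|=1$, and these must be carried through arbitrarily long combs. I would handle this with the standard bar-construction convention of \cite[Prop. 1.12]{Seidel}, normalizing once on the $m_3$ and $m_4$ operations already pinned down in \eqref{basictri1eqn}--\eqref{basictri2eqn} and then checking that the sign is preserved under each elementary recursion step, so that it is enough to verify it in the base cases.
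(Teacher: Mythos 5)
Your proposal is correct and follows essentially the same route as the paper: both apply the homotopy transfer formula $m_n=\sum_T m_n^T$ with the explicit contracting homotopy $H$ realized by the listed $h$-maps, establish that the family \eqref{initialhs}--\eqref{higherhs} is closed under ``multiply then apply $H$'' (with $H$ vanishing on cycles and on the partial terminating compositions), and then read off the surviving comb-shaped trees as in Remark \ref{rmkev2}. Your outline is somewhat more explicit than the paper's about verifying the contraction identities and the sign bookkeeping, but it is the same argument.
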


\begin{proof}
The $\Ainf$-structure on $\Pi$ is determined by the dg structure on $\End^*(\tM)$ and  Theorem ~\ref{HPthm}. The dg category $\End^*(\tM)$ only has only non-trivial first and second multiplications (the derivation and the natural algebra multiplication).  See Example \ref{endex} for more details.

Since all of the higher multiplications in $\End^*(\tM)$ are trivial,
in order to compute $m_n$ we must determine all possible binary trees with $n$ input edges satisfying certain properties.
Let $f_n, \ldots, f_1 \in \Pi$ such that the composition $f_{i+1} \circ f_i$ makes sense for $i=1,\ldots,n-1$.
The input edges (read from left to right) are labeled $f_n, \ldots, f_1$.
First one includes each $f_i \in \Pi$ into $\End^*(\tM)$.
The internal edges are labeled by $H$.

From a calculation we see that $H(f_{i+1} \circ f_i)$ for $f_{i+1}, f_i \in \Pi$ is non-zero only in the cases listed in ~\eqref{initialhs}.

From a calculation we see that higher homotopy maps are produced only when $H$ is applied to a product of the form $hf$ or $fh$ where $h$ is some higher homotopy map and $f$ is an element in $\Pi = \Ext^*(M)$.  The possibilities are listed in ~\eqref{higherhs}.
In particular, the only non-zero way to grow a binary tree labeled as in \S \ref{HPTsec} is illustrated in the remark below.

Finally, to produce an element in $\Pi$, one must apply the projection map $p$ described in \eqref{perturbdiag} to certain products of two elements of $\End^*(\tM)$
listed in ~\eqref{hstoext}.

Any other composition that we need to consider produces a non-cycle and so $H$ of it is set to zero. These cases are enumerated above as partial terminating operations.
\end{proof}

\begin{rmk}\label{rmkev2}
  The $\Ainf$-maps $m_2$ are determined by the composition in the $Ext$-algebra. The maps $m_3$ and $m_4$ can be done by hand. The only way to inductively evolve a binary tree to give non-zero higher operation is illustrated below, see Rmk. \ref{rmkev}.
\begin{tikzpicture}[scale=10, node distance=.68cm]
  \node (0A1) {$p_1$};
  \node (0A2) [right of=0A1] {};
  \node (0A3) [right of=0A2] {$(21)$};
  \node (0A4) [right of=0A3] {$(12)$};
  \node (0B2) [below of=0A2] {$\mu$};
  \node (0B3) [below of=0A3] {};
  \node (0C3) [below of=0B3] {$\mu$};
\node (0D3) [below of=0C3] {};
  \draw[] (0A1) to node {} (0B2);
  \draw[] (0A3) to node {} (0B2);
  \draw[] (0B2) to node [swap] {$H$} (0C3);
  \draw[] (0A4) to node {} (0C3);
\draw[] (0C3) to node {} (0D3);
\node (TMP1) [below=.75 of 0D3] {};
  \node (1A1) [left=2 of TMP1] {$p_1$};
  \node (1A2) [right of=1A1] {};
  \node (1A3) [right of=1A2] {$(21)$};
  \node (1A4) [right of=1A3] {$(12)$};
\node (1A5) [right of=1A4] {$(21)$};
  \node (1B2) [below of=1A2] {$\mu$};
  \node (1B3) [below of=1A3] {};
  \node (1C3) [below of=1B3] {$\mu$};
  \node (1B4) [below of=1A4] {};
  \node (1C4) [below of=1B4] {};
\node (1D4) [below of=1C4] {$\mu$};
\node (1E4) [below of=1D4] {};
  \draw[] (1A1) to node {} (1B2);
  \draw[] (1A3) to node {} (1B2);
  \draw[] (1B2) to node [swap] {$H$} (1C3);
  \draw[] (1A4) to node {} (1C3);
\draw[] (1C3) to node [swap] {$H$} (1D4);
\draw[] (1D4) to node {} (1E4);
\draw[] (1A5) to node {} (1D4);
  \node (2A1) [right=2 of TMP1] {$p_1$};
  \node (2A2) [right of=2A1] {};
  \node (2A3) [right of=2A2] {$(21)$};
  \node (2A4) [right of=2A3] {$(12)$};
\node (2A5) [right of=2A4] {$j_1$};
  \node (2B2) [below of=2A2] {$\mu$};
  \node (2B3) [below of=2A3] {};
  \node (2C3) [below of=2B3] {$\mu$};
  \node (2B4) [below of=2A4] {};
  \node (2C4) [below of=2B4] {};
\node (2D4) [below of=2C4] {$\mu$};
\node (2E4) [below of=2D4] {};
  \draw[] (2A1) to node {} (2B2);
  \draw[] (2A3) to node {} (2B2);
  \draw[] (2B2) to node [swap] {$H$} (2C3);
  \draw[] (2A4) to node {} (2C3);
\draw[] (2C3) to node [swap] {$H$} (2D4);
\draw[] (2D4) to node {} (2E4);
\draw[] (2A5) to node {} (2D4);
\draw[->,dashed,bend left=5] (0D3) to node {} (1A3);
\draw[->,dashed,bend right=5] (0D3) to node {} (2A3);

\node (TMP2) [below=.75 of 1E4] {};
  \node (3A1) [left=2.5 of TMP2] {$p_1$};
  \node (3A2) [right of=3A1] {};
  \node (3A3) [right of=3A2] {$(21)$};
  \node (3A4) [right of=3A3] {$(12)$};
\node (3A5) [right of=3A4] {$(21)$};
\node (3A6) [right of=3A5] {$(12)$};
  \node (3B2) [below of=3A2] {$\mu$};
  \node (3B3) [below of=3A3] {};
  \node (3C3) [below of=3B3] {$\mu$};
  \node (3B4) [below of=3A4] {};
  \node (3C4) [below of=3B4] {};
\node (3D4) [below of=3C4] {$\mu$};
\node (3E4) [below of=3D4] {};
\node (3B5) [below of=3A5] {};
\node (3C5) [below of=3B5] {};
\node (3D5) [below of=3C5] {};
\node (3E5) [below of=3D5] {$\mu$};
\node (3F5) [below of=3E5] {};
  \draw[] (3A1) to node {} (3B2);
  \draw[] (3A3) to node {} (3B2);
  \draw[] (3B2) to node [swap] {$H$} (3C3);
  \draw[] (3A4) to node {} (3C3);
\draw[] (3C3) to node [swap] {$H$} (3D4);
\draw[] (3D4) to node [swap] {$H$} (3E5);
\draw[] (3A5) to node {} (3D4);
\draw[] (3E5) to node {} (3F5);
\draw[] (3A6) to node {} (3E5);
  \node (4A1) [right=2.5 of TMP2] {$p_1$};
  \node (4A2) [right of=4A1] {};
  \node (4A3) [right of=4A2] {$(21)$};
  \node (4A4) [right of=4A3] {$(12)$};
\node (4A5) [right of=4A4] {$(21)$};
\node (4A6) [right of=4A5] {$j_2$};
  \node (4B2) [below of=4A2] {$\mu$};
  \node (4B3) [below of=4A3] {};
  \node (4C3) [below of=4B3] {$\mu$};
  \node (4B4) [below of=4A4] {};
  \node (4C4) [below of=4B4] {};
\node (4D4) [below of=4C4] {$\mu$};
\node (4E4) [below of=4D4] {};
\node (4B5) [below of=4A5] {};
\node (4C5) [below of=4B5] {};
\node (4D5) [below of=4C5] {};
\node (4E5) [below of=4D5] {$\mu$};
\node (4F5) [below of=4E5] {};
  \draw[] (4A1) to node {} (4B2);
  \draw[] (4A3) to node {} (4B2);
  \draw[] (4B2) to node [swap] {$H$} (4C3);
  \draw[] (4A4) to node {} (4C3);
\draw[] (4C3) to node [swap] {$H$} (4D4);
\draw[] (4D4) to node [swap] {$H$} (4E5);
\draw[] (4A5) to node {} (4D4);
\draw[] (4E5) to node {} (4F5);
\draw[] (4A6) to node {} (4E5);

\draw[->,dashed,bend left=5] (1E4) to node {} (3A4);
\draw[->,dashed,bend right=5] (1E4) to node {} (4A3);

\node (TMP3) [below of=2E4] {};
\node (TMP4) [right=2 of TMP3] {$0$};
\draw[->,dashed,bend right=5] (2E4) to node {} (TMP4);

\node (TMP5) [below of=4F5] {};
\node (TMP6) [right=2 of TMP5] {$0$};
\draw[->,dashed,bend right=5] (4F5) to node {} (TMP6);

\node (TMP7) [below of=3F5] {};
\node (TMP8) [right=2 of TMP7] {$\ldots$};
\node (TMP9) [left=2 of TMP7] {$\ldots$};
\draw[->,dashed,bend right=5] (3F5) to node {} (TMP8);
\draw[->,dashed,bend left=5] (3F5) to node {} (TMP9);
\end{tikzpicture}

\end{rmk}

\subsection{The preprojective cone}\label{lastsecsec}
Recall from Def. \ref{completedef} that our abstract cones are determined by
a certain lifting problem. This section combines all of the bits and pieces
from previous sections and provides some explanation as to what we have
computed.

Before proceeding, it is useful to observe the following remark.
\begin{rmk}
  If $F : \eC\to \eD$ is an $\Ainf$-functor such that $F^d = 0$ for $d\geq 2$, then the $\Ainf$-relation in Def. \ref{functordef} becomes
  \begin{equation}\label{simpleainf} 
m^\eD_d(F^1(f_d),\cdots, F^1(f_1)) = F^1(m_d^{\eC}(f_d,\cdots, f_1)).
  \end{equation}
\end{rmk}

The proposition below is a detailed version of the comments at the end of Ex. \ref{ibcompex} in the preprojective setting.

\begin{prop}\label{embprop}
The subcategory $\pi$ associated to the two simple modules $S_1, S_2\in \Pi A_2\module$ 
$$\begin{tikzpicture}[scale=10, node distance=3cm]
\node (S) {$S_1$};
\node (A) [right of=S] {$S_2$};
\draw[<-,bend left=32] (S) to node {$\a$} (A);
\draw[<-,bend left=32] (A) to node {$\b$} (S);
\end{tikzpicture}$$ 
is formal; the higher $\Ainf$-structure $m_d^\pi = 0$ for $d > 2$. The inclusion $\iota : \pi \hookrightarrow \Pi$ is an $\Ainf$-functor $\iota = \{ \iota^d \}$ which is determined by the assignments:
$\iota(S_i) := S_i$ on objects, $\iota^1(\a) := \a$, $\iota^1(\b) := \b$ on maps and $\iota^d := 0$ for $d\geq 2$.
\end{prop}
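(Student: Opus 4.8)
The plan is to exploit that $\pi$ is, by construction, the full $\Ainf$-subcategory of $\Pi$ spanned by the objects $S_1$ and $S_2$, so that each $m_d^\pi$ is literally the restriction of $m_d^\Pi$ to tensor products of the morphism spaces $\Hom_\pi(S_1,S_1)$, $\Hom_\pi(S_2,S_2)$, $\Hom_\pi(S_1,S_2)$ and $\Hom_\pi(S_2,S_1)$ --- which, by the basis table in \S\ref{picatsec}, have bases $\{u_1^n\}_{n\geq0}$, $\{u_2^n\}_{n\geq0}$, $\{\b u_1^n\}_{n\geq0}$ and $\{\a u_2^n\}_{n\geq0}$ respectively --- together with the units $1_{S_1}$ and $1_{S_2}$. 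With this reduction, both assertions of the proposition become a direct inspection of Theorem \ref{bigcomputationtheorem}.

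First I would prove formality. Scanning the list of nontrivial higher operations in Theorem \ref{bigcomputationtheorem}, one checks that each such operation --- and hence also each of its images under the symmetry $\kappa$ --- has at least one input lying in the set $\{(12),(21),j_1,j_2,p_1,p_2\}$; each of these morphisms involves a projective object $\proj_1$ or $\proj_2$, and so is \emph{not} a morphism between $S_1$ and $S_2$. Consequently, whenever $f_d,\dots,f_1$ are composable morphisms of $\pi$ and $d\geq3$, we get $m_d^\Pi(f_d,\dots,f_1)=0$, i.e.\ $m_d^\pi=0$ for all $d>2$, so $\pi$ is formal. The remaining datum $m_2^\pi$ is the restriction of the $m_2$-table of \S\ref{picatsec}, which gives exactly the claimed algebra structure on $\pi$ (generated by $\a,\b$ subject to $u_2\b=\b u_1$ and $u_1\a=\a u_2$, with $u_1=\a\b$, $u_2=\b\a$).

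Next I would verify that $\iota$ is a strict $\Ainf$-functor. Since $\iota^d=0$ for $d\geq2$, the functor equations of Definition \ref{functordef} collapse, by \eqref{simpleainf}, to
$$m_d^\Pi\bigl(\iota^1 f_d,\dots,\iota^1 f_1\bigr)=\iota^1\bigl(m_d^\pi(f_d,\dots,f_1)\bigr),\qquad d\geq1.$$
For $d=1$ both sides vanish because $m_1=0$ throughout. For $d=2$ the identity holds because $\pi$ is a full subcategory, so $m_2^\pi$ is by definition the restriction of $m_2^\Pi$; in particular this forces $\iota^1$ to be the evident degree-preserving, multiplicative inclusion (for instance $\iota^1(u_1^n)=u_1^n$ and $\iota^1(\b u_1^n)=\b u_1^n$), as claimed. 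For $d\geq3$ the right-hand side vanishes by the formality just established, while the left-hand side vanishes because $\iota^1 f_d,\dots,\iota^1 f_1$ are again morphisms between $S_1$ and $S_2$, on which, as noted above, no higher operation of $\Pi$ is supported. Finally the unit conditions $\iota^1(1_{S_i})=1_{\iota(S_i)}$ and $\iota^d(\dots,1_{S_i},\dots)=0$ for $d\geq2$ hold trivially.

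There is no analytic content here: the argument is pure bookkeeping. The one step that demands care --- and the real crux --- is confirming that the list in Theorem \ref{bigcomputationtheorem} is genuinely exhaustive (this is the substance of that theorem, coming from the homotopy-perturbation computation organized in \S\ref{picatsec}) and that each of its entries, together with every $\kappa$-counterpart, truly contains one of $(12),(21),j_1,j_2,p_1,p_2$ among its inputs. Once that case-check is done, both halves of the proposition follow at once.
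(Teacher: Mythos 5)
Your argument is correct and follows essentially the same route as the paper: both reduce the claim to an inspection of the list in Theorem \ref{bigcomputationtheorem}, observing that every nontrivial higher operation has an input involving a projective object and hence none is supported on the morphisms of $\pi$, and then verify Eqn.~\eqref{simpleainf} for the strict inclusion $\iota$. Your version is in fact slightly more careful than the paper's, which speaks only of entries "of the form $\a$ and $\b$," whereas you correctly check all of $\Hom_\pi$, i.e.\ the full families $u_i^n$, $\b u_1^n$, $\a u_2^n$.
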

\begin{proof}
The category $\pi$ is formal by Theorem \ref{bigcomputationtheorem} since there
are no non-trivial higher operations in the list involving only entries of
the form $\a$ and $\b$.  Since $\pi$ is formal, we need only check that
$\iota$ satisfies Eqn. \eqref{simpleainf}. This again follows from the
observation that there are no relations among $\a$ and $\b$ in $\Pi$ and
there are no higher $\Ainf$-relations, $m^\Pi_d\vert_{\pi} \equiv 0$ for
$d > 2$, by Theorem \ref{bigcomputationtheorem}.
\end{proof}

In the notation introduced by the proposition, the lifting problem in Def. \ref{completedef} can be restated by the commutative diagram below.
$$\begin{tikzpicture}[scale=10, node distance=1.5cm]
\node (A1) {$\pi$};
\node (C1) [below of=A1] {$\Pi$};
\node (D1) [right of=A1] {$\eD$};
\draw[right hook->] (A1) to node [swap] {$\iota$} (C1);
\draw[->] (A1) to node {$F$} (D1);
\draw[dashed,->] (C1) to node [swap] {$\tilde{F}$} (D1);
\end{tikzpicture}$$
So the initial data is an $\Ainf$-functor $F : \pi\to\eD$ and a cone on $F$ is determined by a lift along $\iota$, i.e. an $\Ainf$-functor $\tilde{F} : \Pi \to \eD$ for which $\tilde{F}\circ \iota \simeq F$ in the category $\AAinf(\pi, \eD)$.

The theorem below shows that the upper and lower parts of the $\Pi$-diagram
at the beginning of \S \ref{picatsec} are triangles in the sense of
Theorem \ref{deltathm}. It follows that, up to homotopy, the portions of the
category in the completion, $\tilde{F}(P_1)$ and $\tilde{F}(P_2)$, are classical  cones on the maps $F^1(\b)$ and $F^1(\a)$ respectively.

  \begin{thm}\label{trithmthm}
    If $F : \Pi \to \eD$ is an $\Ainf$-functor from the preprojective category to an $\Ainf$-category $\eD$ then the objects
    associated to $P_i$ in $\eD$ are homotopy equivalent to cones on the maps $\a$ and $\b$. More precisely,
\begin{equation}\label{alabeleqn}
  F(P_1) \simeq C(F^1(\b))\conj{ and } F(P_2) \simeq C(F^1(\a)).
\end{equation}  
    \end{thm}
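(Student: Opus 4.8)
The plan is to realize $P_1$ and $P_2$ as third vertices of distinguished triangles inside $\Pi$ itself and then transport those triangles along $F$; equivalently, to run the principle ``$\Ainf$-functors preserve distinguished triangles'' of Theorem~\ref{deltathm} for the two triangles attached to the short exact sequences \eqref{seseqn}.

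First I would isolate inside $\Ho(\Pi)$ the two candidate triangles
$$
S_1 \xto{j_2} P_2 \xto{p_2} S_2 \xto{\a} S_1[1]
\conj{and}
S_2 \xto{j_1} P_1 \xto{p_1} S_1 \xto{\b} S_2[1].
$$
These are essentially forced: $\a$ spans $\Ext^1(S_2,S_1)$ and $\b$ spans $\Ext^1(S_1,S_2)$, and a short degree count shows that these are the only loops of composable morphisms in $\Pi$ of degrees $(0,0,1)$ --- the degree-one arrow can only be $\a$ or $\b$, and it must occupy the $\ga$-slot of $\Tri$. I would then verify the hypotheses of Theorem~\ref{deltathm}. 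Condition \eqref{m2def} holds for degree reasons: for instance $m_2(p_2,j_2)$ would lie in $\Hom^0(S_1,S_2)$, which vanishes because $\Hom(S_1,S_2)$ sits in odd degrees, and there are no non-zero morphisms $P_2\to S_1$ or $S_2\to P_2$ at all. Condition \eqref{m3def} is precisely \eqref{basictri1eqn}, cases $(A)$ and $(B)$. By Theorem~\ref{deltathm} there are then $\Ainf$-functors $t_\a,t_\b\colon\Tri\to\Pi$ realizing these two triangles; by the classification of Theorem~\ref{bigcomputationtheorem} (no higher $m_d^\Pi$ lives on a string built only from $\{j_2,p_2,\a\}$ or from $\{j_1,p_1,\b\}$) one may even take them strict, though that is not needed.

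Next I would post-compose with $F$. The composite $F\circ t_\a\colon\Tri\to\eD$ is again an $\Ainf$-functor, and since $(F\circ t_\a)^1=F^1\circ t_\a^1$ one has $\Ho(F\circ t_\a)=\Ho(F)\circ\Ho(t_\a)$; in particular it carries the generators of $\Tri$ to the homotopy classes of $F^1(j_2),F^1(p_2),F^1(\a)$. Hence, by Theorem~\ref{deltathm}, the triangle
$$
F(S_1)\xto{F^1(j_2)} F(P_2)\xto{F^1(p_2)} F(S_2)\xto{F^1(\a)} F(S_1)[1]
$$
is distinguished in $\Ho(\eD)$ --- equivalently, via the Yoneda embedding, in the triangulated category $\Ho(\eD\module)$. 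Rotating it one step backwards gives the distinguished triangle $F(S_2)[-1]\to F(S_1)\to F(P_2)\to F(S_2)$, whose third vertex exhibits $F(P_2)$ as the cone on the map $F(S_2)[-1]\to F(S_1)$ induced by $F^1(\a)$; this is the assertion $F(P_2)\simeq C(F^1(\a))$. The equivalence $F(P_1)\simeq C(F^1(\b))$ follows in the same way from $t_\b$, or directly from the involution $\osym$ of Proposition~\ref{osymprop}, which interchanges $P_1\leftrightarrow P_2$ and $\a\leftrightarrow\b$.

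The homological content here is light once Theorem~\ref{bigcomputationtheorem} is in hand; the part that needs care is the bookkeeping. One must track the $[-1]$ and the sign produced by rotating the triangle, so that $F(P_i)$ matches $C(F^1(\cdot))$ on the nose rather than up to a shift, and one must make sure that the degree-one maps $\a,\b$ can occupy only the $\ga$-slot of $\Tri$, which is what pins down the two triangles above as the only candidates through $P_1$ and $P_2$. The one remaining routine obligation --- checking that $t_\a,t_\b$ are honest $\Ainf$-functors, i.e.\ that no stray higher $m_d^\Pi$ obstructs Eqn.~\eqref{simpleainf} --- is a bounded computation, since by Theorem~\ref{bigcomputationtheorem} the only non-trivial higher operations among $j_i,p_i,\a,\b$ are the three Massey-product identities already used.
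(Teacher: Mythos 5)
Your proposal is correct and follows essentially the same route as the paper: the paper constructs the two strict $\Ainf$-functors $\Delta\to\Pi$ (denoted $\iota_1,\iota_2$ there, your $t_\b,t_\a$) sending the generators to $\{j_1,p_1,\b\}$ and $\{j_2,p_2,\a\}$, verifies Eqn.~\eqref{simpleainf} via the vanishing compositions and the $m_3$-identities of Eqn.~\eqref{basictri1eqn}, and then composes with $F$ and invokes Theorem~\ref{deltathm} to read off $F(P_i)$ as the appropriate cone. Your additional remarks on the rotation bookkeeping and on degree reasons forcing the two triangles are consistent with, and slightly more explicit than, the paper's argument.
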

    \begin{proof}
The category $\Delta$ is pictured in Def. \ref{tricatdef}. There are three objects $A$,$B$ and $C$ together with maps $\a : A \to B$, $\b : B\to C$ and $\ga : C\to A$ with degrees $\vnp{\a} = 0$, $\vnp{\b} = 0$ and $\vnp{\ga} = 1$.

For $i=1,2$, there are assignments $\iota_i : \Ob(\Delta) \hookrightarrow \Ob(\Pi)$ and  $\iota_i^1 : \Hom_{\Delta}(X_1,X_0) \to Hom_{\Pi}(\iota_i(X_1),\iota_i(X_0))$ given by
$$      \begin{array}{l||c|c|c|}
        & A   & B & C\\
\hline\hline
\iota_1 & S_2 & P_1 & S_1 \\
\hline
\iota_2 & S_1 & P_2 & S_2 \\
\hline
        \end{array}
\conj{ and }
      \begin{array}{l||c|c|c|}
        & \a   & \b & \ga\\
\hline\hline
\iota_1^1 & j_1 & p_1 & \b \\
\hline
\iota_2^1 & j_2 & p_2 & \a \\
\hline
        \end{array}.
$$
Set $\iota_i^d := 0$ for $d \geq 2$. In order to see that the assignments $\iota_i$ are $\Ainf$-functors, we must check that the $\Ainf$-relation Eqn. \eqref{simpleainf} above is satisfied. The cases $\iota_1$ and $\iota_2$ are symmetric. So consider $\iota_1$, then Eqn. \eqref{simpleainf} holds because the only non-identity compositions are zero
$$j_1 \circ \b = 0 \conj{} p_1\circ j_1 = 0 \conj{} \b \circ p_1 = 0$$
and the only higher $\Ainf$-operation supported by the morphisms $\{ p_1, j_1, \b \}$ in $\Pi$ are given by the $m_3$ in Eqn. \eqref{basictri1eqn}; these agree with Eqn. \eqref{m3def}.

Thus the restrictions formed by the compositions $F\circ \iota_i : \Delta \to \eD$ are $\Ainf$-functors. By Theorem \ref{deltathm}, 
each of the two restrictions determines a triangle in $\eD$. In particular, the image of each vertex of $\Delta \subset \Pi$ must be homotopic to the cone on the morphism in subcategory complementary to the vertex.  A special case of this is Eqn. \eqref{alabeleqn}.
\end{proof}

\begin{example}
Suppose we have an $\Ainf$-functor $F : \pi \to \eD$. The structure maps $F : \Ob(\pi) \to \Ob(\eD)$ and $F^1_{X,Y} : \Hom_\pi(X,Y) \to \Hom_{\eD}(F(X),F(Y))$ determine two cycles 
$$\begin{tikzpicture}[scale=10, node distance=3cm]
\node (S) {$S_1'$};
\node (A) [right of=S] {$S_2'$};
\draw[<-,bend left=32] (S) to node {$\a'$} (A);
\draw[<-,bend left=32] (A) to node {$\b'$} (S);
\end{tikzpicture}$$ 
where $S_1' = F(S_1)$, $S_2'= F(S_2)$, $\a' = F^1(\a)$ and $\b' = F^1(\b)$. If $\eD$ is triangulated then there are objects $C(\a')$ and $C(\b')$ in $\eD$ which are cones in the sense of Def. \ref{inconedef}. By the theorem above, a lift $\tilde{F} : \Pi \to \eD$ of $F$ along $\iota$ must associate to $P_1$ and $P_2$ objects which are homotopy equivalent to $C(\b')$ and $C(\a')$ respectively. When $\eD$ is a dg category of complexes, this can be made very explicit. The diagram for $\Pi$ at the beginning of \S \ref{picatsec} becomes the one below.
$$\begin{tikzpicture}[scale=10, node distance=2cm]
\node (S_1) {$S_1'$};
\node (A) [right of=S_1] {};
\node (B) [right of=A] {};
\node (C) [right of=B] {};
\node (S_2) [right of=C] {$S_2'$};
\node (A_2) [above of=B] {$(S_1'\oplus S_2[1]',d_{C(\a')})$};
\node (A_1) [below of=B] {$(S_1[1]'\oplus S_2',d_{C(\b')})$};
\draw[->] (S_1) to node {$(1,0)^t$} (A_2);
\draw[->] (A_2) to node {$(0,1)$} (S_2);
\draw[->] (S_2) to node {$(0, 1)^t$} (A_1);
\draw[->] (A_1) to node {$(1,0)$} (S_1);
\draw[transform canvas={yshift=+0.5ex},->] (S_2) to node[pos=.75,above] {$\a'$} (S_1);
\draw[transform canvas={yshift=-0.5ex},->] (S_1) to node[pos=.75,below] {$\b'$} (S_2);
\end{tikzpicture}$$
Since $(21) = j_1\circ p_2$ and $(12) = j_2\circ p_1$, the matrices associated to these maps are
$$(21) = \left(\begin{array}{cc} 1 & 0 \\ 0 & 0 \end{array}\right) \conj{ and } (12) = \left(\begin{array}{cc} 0 & 0 \\ 0 & 1 \end{array}\right)$$
and we see directly the relations $(12)(21) = 0$ and $(21)(12) = 0$.
\end{example}

\subsection{Coda}\label{codasec}

The remainder of this paper establishes some context which pertains to
future work.

\begin{rmk}
 Just as Proposition \ref{embprop} shows that the subcategory $\pi$ determined by simples embeds in $\Pi$, the quiver presentation for $\Pi A_2$ in Def. \ref{pai2def} embeds in $\Pi$ by the Yoneda embedding. So for $\Pi$, there are two notions of map which yield the same notion of an $\iota$-distinguished triangle.
One comes from the $\a$ and $\b$ maps between simple modules, and another comes from the $(21)$ and $(12)$ maps between projective modules. 
These two are dual in the sense that some of the data arising from an $\iota$-distinguished triangle of one $\iota$-cone construction is the same as the initial data for the other.
Since exchanging simples and projectives results only in a rotation of the triangle in the construction of the $A_2$-cone, this shows that the preprojective cones exhibit some new behavior.
\end{rmk}

\begin{rmk}\label{postnikovrmk}
The simplicity of Theorem \ref{trithmthm} seems
incongruous with the complexity of the $\Ainf$-structure found in Theorem
\ref{bigcomputationtheorem}. The $\Ainf$-structure is complicated in part
because of the trick in Eqn. \eqref{trickeqn} and in part because it is
recording higher Postnikov systems among compositions of $\a$ and $\b$
maps. We will give an informal explanation of the first
non-trivial example. 

Recall from \S \ref{reltrisec} that functors $\Delta \to \eD$ correspond to
distinguished triangles in $\eD$ and that $\Delta$ agrees with the partially
wrapped Fukaya category $\aF_3$ of the disk with three marked points.
There is an extension of these statements.

\begin{defn}\label{partfukdef}
  The partially wrapped Fukaya category $\aF_n$ of the disk $D^2$ with $n$ marked points along the boundary is the $\Ainf$-category with $n$-objects $\Ob(\aF_n) = \{X_i\}_{i\in \ZZ/n}$ and maps $1_{X_i} : X_i \to X_i$ and $f_i : X_i \to X_{i+1}$. The gradings are chosen to satisfy the constraint $\sum_{i} \vnp{f_i} = n-2$. The only non-trivial $\Ainf$-operations are compositions with identity and cyclic permutations of
  $$m_n(f_n,\ldots, f_1) = 1_{X_1}.$$
\end{defn}

See \cite[above Eqn. (3.21)]{HKK}, \cite{Nadler, DK} or other references \cite[I (3g) Rmk. 3.11]{Seidel}. For $n\geq 3$, functors from the partially wrapped Fukaya category $\aF_n \to \eD$ correspond to $n$-fold extensions among objects in $\eD$. Since there is a Morita equivalence
$$\aF_n \sqcup_{i,j} \aF_m \xto{\sim} \aF_{n+m-2}$$
associated to the gluing of the $i$th boundary object in $(D^2,n)$ to the $j$th boundary object in $(D^2,m)$ \cite[\S 3.6]{HKK}, functors $\aF_n \to \eD$ correspond to $n$-fold extensions among objects in $\eD$ since every disk with $n$ marked points can be subdivided into a gluing of disks with $3$ marked points.
On the other hand, precisely the same logic as was used in Theorem \ref{trithmthm} to establish the existence of functors (for $i=1,2$) $\iota_i : \Delta \to \Pi$ for the triangles formed by the maps $\{j_1,p_1,\b\}$ and $\{j_2,p_2,\a\}$ corresponding to $\Ainf$-operations in Eqn. \eqref{basictri1eqn} can be used to establish the existence of $\Ainf$-functors
$\kappa_i : \aF_4 \to \Pi$
for the quadrilaterals corresponding to the maps $\{ p_1, (21), j_2, u_1 \}$ and $\{ p_2, (12), j_1, u_2 \}$ appearing in Eqn. \eqref{basictri2eqn}. So in addition to determining two distinguished triangles
$$ \cdots \to S_1' \xto{\b'} S_2' \to C(\b') \to S_1'[1]\to\cdots \conj{ and } \cdots \to S_2' \xto{\a'} S_1' \to C(\a') \to S_2'[1]\to \cdots$$
as in Thm. \ref{trithmthm} earlier, an $\Ainf$-functor $\tilde{F} : \Pi \to \eD$ corresponding to a cone on $F : \pi \to \eD$ determines 4-fold Postnikov systems via restrictions $\tilde{F}\kappa_i : \aF_4 \to \eD$.  For the maps $\{ p_2, (12), j_1, u_2 \}$ this corresponds to
$$\cdots \to C(\b') \xto{(12)'} C(\a') \xto{p_2'} S_2' \xto{u_2'} S_2'[2] \to \cdots$$
where $(12)'=\tilde{F}^1(12)$, $p_2'=\tilde{F}^1(p_2)$ and $u_2'=\tilde{F}^1(u_2)$.
Topologically the quadrilateral formed by the gluing of the two triangles below along the object $S_1'$.
\begin{center}
\begin{overpic}[scale=0.6]
{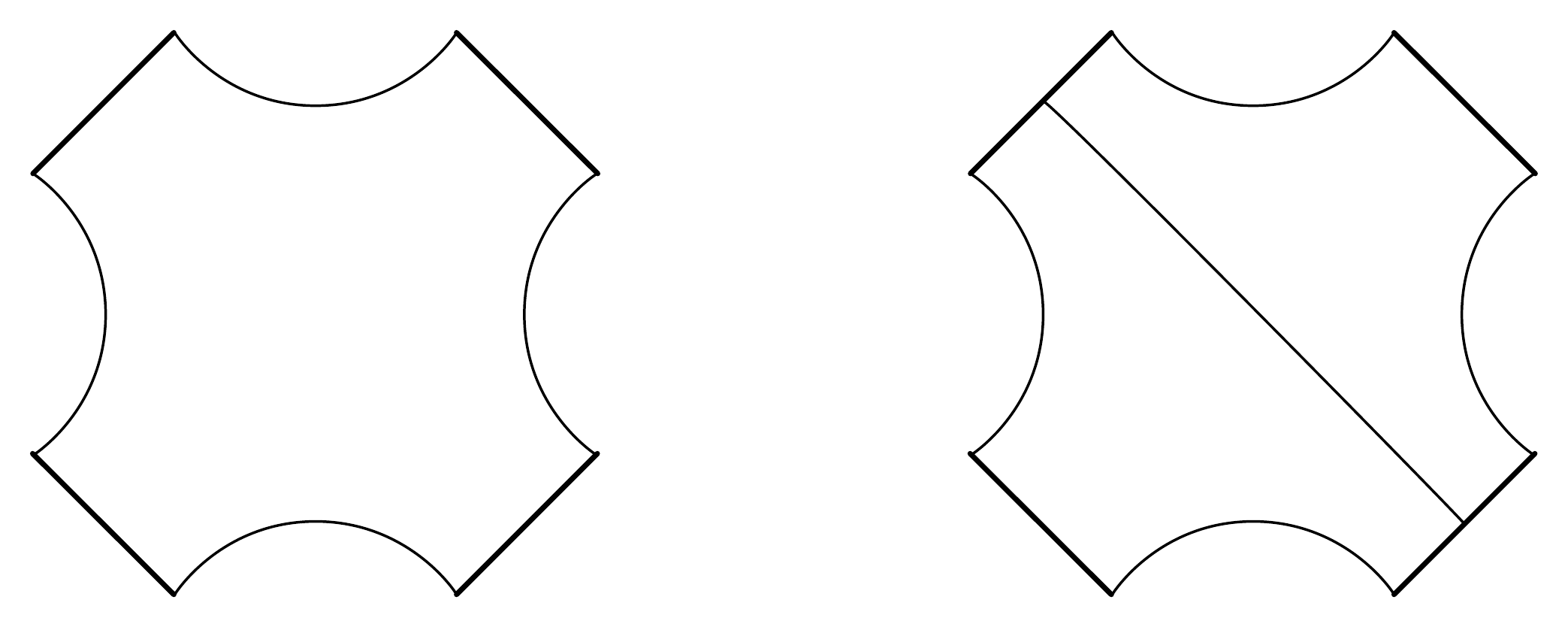}
\put(172,71){$\rightsquigarrow$}
\put(5, 128){$(12)'$}
\put(10,12){$j_1'$}
\put(125, 12){$u_2'$}
\put(125, 125){$p_2'$}
\put(60, 130){$C(\a')$}
\put(65, 10){$S_2'$}
\put(128, 70){$S_2'$}
\put(-9, 70){$C(\b')$}
\put(340,125){$p_2'$}
\put(225,12){$j_1'$}
\put(294, 79){$S_1'$}
\put(275, 130){$C(\a')$}
\put(280, 10){$S_2'$}
\put(343, 70){$S_2'$}
\put(206, 70){$C(\b')$}
\put(222,120){$p_1'$}
\put(237,133){$j_2'$}
\put(351,24){$\a'$}
\put(333,7){$\b'$}
\end{overpic}
\end{center}
\end{rmk}

We conclude with a connection between the $\Ainf$-category $\Pi$ and an important category coming from symplectic topology.
The wrapped Fukaya category $D^\pi\aW(P)$ of the pair of pants
$P=S^2 \backslash \{D^2_1, D^2_2, D^2_3\}$ can be generated by three
Lagrangians $X_0$, $X_1$ and $X_2$. The $\Ainf$-subcategory $\A$ determined
by these objects was studied in relation to the homological mirror symmetry
conjecture \cite{AAEKO}. Proposition \ref{aaekorelprop} constructs a functor
from this category to a version of the preprojective category $\Pi'$.

\begin{defn}{(\cite{AAEKO})}\label{aaekodef}
  The generating category $\A$ has objects $\Ob(\A) = \{ X_0, X_1, X_2\}$ and morphisms
  $$Hom_{\A}(X_i,X_j) = \left\{\begin{array}{ll} k[x_i,y_i]/(x_iy_i) & i = j \\ k[x_{i+1}]u_{i,i+1} = u_{i,i+1}k[y_i] & j = i+1 \\ k[y_{i-1}]v_{i,i-1} = v_{i,i-1}k[x_i] & j=i-1 \\ 0 & \normaltext{otherwise}. \end{array}\right.$$
These maps compose according to
\begin{equation}\label{compruleeqn}
  (x_i^k u_{i-1,i}) \circ (v_{i,i-1} x_i^l) = x_i^{k+l+1}  \conj{ and } (v_{i,i-1} x_i^l) \circ (x_i^k u_{i-1,i}) = y_{i-1}^{k+l+1}
\end{equation}  
and are graded by setting $\vnp{u_{0,1}} = 1$, $\vnp{v_{1,0}} = 1$ and $\vnp{u_{i,i+1}} = 0$, $\vnp{v_{i,i-1}} = 0$ in all other cases. This is pictured on the lefthand side of \eqref{2diagrams}.


The higher $\Ainf$-operations are determined by 
\begin{equation}\label{pantstri1eqn}
\begin{array}{lcl}
m_3(u_{2,0}, u_{1,2}, u_{0,1}) = 1_{X_0} & \quad & m_3(v_{1,0}, v_{2,1}, v_{0,2}) = 1_{X_0}\\
m_3(u_{0,1}, u_{2,0}, u_{1,2}) = 1_{X_1} & \quad & m_3(v_{2,1}, v_{0,2}, v_{1,0}) = 1_{X_1}\\
m_3(u_{1,2}, u_{0,1}, u_{2,0}) = 1_{X_2} & \quad & m_3(v_{0,2}, v_{1,0}, v_{2,1}) = 1_{X_2}\\
\end{array}
\end{equation}
\end{defn}

In order to make $\Pi$ look like $\A$ in Def. \ref{aaekodef}, we need to
combine the two projective objects $P_1$ and $P_2$ into one by forming the
direct sum $P := P_1\opp P_2$.  The following definition recalls how direct sums
are formed. This is a special case of  additivization \cite[I (3k)]{Seidel}.

\begin{defn} \label{adddef}
  If $X, Y\in \Ob(\aA)$ are objects in an $\Ainf$-category $\aA$ then the direct sum $X\opp Y\in D^\pi(\aA)$, \cite[I (4b)]{Seidel}, satisfies
\begin{align*}
  Hom(X\opp Y, Z) &= Hom(X,Z) \opp Hom(Y,Z), \conj{  } \\ Hom(W, X\opp Y) &= Hom(W,X) \opp Hom(W,Y),
  \end{align*}
  and the $\Ainf$-operations extend additively
$$m_k(a_k,\ldots, (a_i,a_i'), \ldots, a_1) = m_k(a_k,\ldots, a_i, \ldots, a_1) + m_k(a_k,\ldots, a_i', \ldots, a_1).$$
\end{defn}

This allows one to define an $\Ainf$-category $Mat(\mathcal{A})$ of direct sums of objects of $\mathcal{A}$.
The new category $\Pi'$ is formed by combining the two projectives.

\begin{defn}\label{piprimedef}
The category $\Pi'$ is the full $\Ainf$-subcategory of $Mat(\Pi)$ formed by the objects $\{S_1, S_2, P\}$ where $P:= P_1\opp P_2$. The $\Ainf$-structure of $\Pi'$ is determined by Thm. \ref{bigcomputationtheorem} and Def. \ref{adddef} above. The objects and morphisms in $\Pi'$ are pictured on the righthand side of \eqref{2diagrams}.


\end{defn}

\begin{equation}
\label{2diagrams}
\begin{tikzpicture}[scale=10, node distance=1.5cm]
\node (S) {$X_0$};
\node (X) [right=1.5cm of S] {};
\node (A) [right=1.5cm of X] {$X_1$};
\node (B) [below=2.598cm of X] {$X_2$};
\draw[<-,bend left=8] (S) to node {$v_{1,0}$} (A);
\draw[<-,bend left=8] (A) to node {$u_{0,1}$} (S);

\draw[<-,bend left=8] (B) to node {$u_{1,2}$} (A);
\draw[<-,bend left=8] (A) to node {$v_{2,1}$} (B);

\draw[<-,bend left=8] (B) to node {$v_{0,2}$} (S);
\draw[<-,bend left=8] (S) to node {$u_{2,0}$} (B);
\end{tikzpicture}
\conj{}
\begin{tikzpicture}[scale=10, node distance=1.5cm]
\node (S) {$S_1$};
\node (X) [right=1.5cm of S] {};
\node (A) [right=1.5cm of X] {$S_2$};
\node (B) [below=2.598cm of X] {$P$};
\draw[<-,bend left=8] (S) to node {$\a$} (A);
\draw[<-,bend left=8] (A) to node {$\b$} (S);

\draw[<-,bend left=8] (B) to node {$j_1$} (A);
\draw[<-,bend left=8] (A) to node {$p_2$} (B);

\draw[<-,bend left=8] (B) to node {$j_2$} (S);
\draw[<-,bend left=8] (S) to node {$p_1$} (B);
\end{tikzpicture}
\end{equation}

\begin{prop}\label{aaekorelprop}
There is a canonical $\Ainf$-functor $G : \A \to \Pi'$.
\end{prop}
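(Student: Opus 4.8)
The plan is to write $G$ down explicitly, reading off the data from the side‑by‑side picture in \eqref{2diagrams}. On objects, set $G(X_0)=S_1$, $G(X_1)=S_2$, and $G(X_2)=P:=P_1\opp P_2$. On the generating morphisms of $\A$, define the linear part $G^1$ by $u_{0,1}\mapsto\b$, $v_{1,0}\mapsto\a$, $u_{1,2}\mapsto j_1$, $v_{2,1}\mapsto p_2$, $v_{0,2}\mapsto j_2$, $u_{2,0}\mapsto p_1$; this is compatible with gradings because $|u_{0,1}|=|v_{1,0}|=1=|\a|=|\b|$ while every other generator sits in degree $0$. On the local endomorphism rings $\Hom_{\A}(X_i,X_i)=k[x_i,y_i]/(x_iy_i)$ the composition law \eqref{compruleeqn} with $k=l=0$ forces $G^1(x_0)=G^1(y_1)=0$, $G^1(y_0)=u_1$, $G^1(x_1)=u_2$, $G^1(x_2)=(21)$, $G^1(y_2)=(12)$, and then $G^1$ extends uniquely to the mixed $\Hom$‑spaces through the two presentations $k[x_{i+1}]u_{i,i+1}=u_{i,i+1}k[y_i]$ of Definition~\ref{aaekodef}.

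First I would check that $G^1$ is well defined, i.e.\ that the two presentations of each mixed $\Hom$‑space are sent to the same element. For $\Hom_{\A}(X_0,X_1)$ this is the identity $u_2^n\b=\b u_1^n$ in $\Pi$, which follows by induction from $u_2\b=\b u_1$ (Theorem~\ref{bigcomputationtheorem}); for the $\Hom$‑spaces touching $X_2$ both presentations go to $0$, since $(21)\circ j_1=(12)\circ j_2=p_2\circ(21)=p_1\circ(12)=0$ once $j_i,p_i$ are read as morphisms into and out of $P=P_1\opp P_2$. Next I would verify the $d=2$ part of the $\Ainf$‑functor equations of Definition~\ref{functordef}; since $m_1$ vanishes on both sides this is exactly the statement that $G^1$ is an algebra homomorphism (equation \eqref{simpleainf} for $d=2$), which reduces to matching \eqref{compruleeqn} against the multiplication table of \S\ref{picatsec}: the two composition families $(x_i^ku_{i-1,i})\circ(v_{i,i-1}x_i^l)=x_i^{k+l+1}$ and $(v_{i,i-1}x_i^l)\circ(x_i^ku_{i-1,i})=y_{i-1}^{k+l+1}$ are carried to $m_2(j_1,p_2)=(21)$, $m_2(j_2,p_1)=(12)$, $m_2(\a u_2^m,\b u_1^n)=u_1^{n+m+1}$, $m_2(\b u_1^m,\a u_2^n)=u_2^{n+m+1}$ together with the obvious relations among powers of $u_1,u_2$.

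The substance is the $d=3$ equation. Because $\Hom_{\A}(X_i,X_j)$ with $i\ne j$ is concentrated in a single parity, nearly every $m_2^{\A}$‑term on the right‑hand side of that equation vanishes for degree reasons, so the verification collapses to comparing the six $m_3$‑triangles \eqref{pantstri1eqn} of $\A$ against the $m_3$‑operations of $\Pi'$. Four of the six pass through immediately against \eqref{basictri1eqn}: for instance $m_3^{\A}(u_{2,0},u_{1,2},u_{0,1})=1_{X_0}$ goes to $m_3(p_1,j_1,\b)=1_{S_1}$, together with the rotations landing on $X_0$ and $X_1$, and one checks that the accompanying $G^2$‑correction terms vanish for degree reasons. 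The two remaining relations — those whose output is $1_{X_2}$, namely $m_3^{\A}(u_{1,2},u_{0,1},u_{2,0})$ and $m_3^{\A}(v_{0,2},v_{1,0},v_{2,1})$ — are the crux: strict unitality forces their images to be $1_P$, whereas the $\Pi$‑triangles only supply the idempotent summands $m_3(j_1,\b,p_1)=1_{P_1}$ and $m_3(j_2,\a,p_2)=1_{P_2}$ of $1_P=1_{P_1}+1_{P_2}$. Recovering the complementary idempotent — making $G$ respect the triangle relations at $X_2$ after $P_1$ and $P_2$ have been amalgamated via Definition~\ref{adddef} — is the main obstacle; handling it calls on the remaining operations in Theorem~\ref{bigcomputationtheorem} (the ones built from the loops $(121)$ and $(212)$) and, if needed, on nonzero higher components $G^d$, $d\ge 2$. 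Once the $d=3$ equation is in place the equations for $d\ge 4$ are essentially free: $m^{\A}_{\ge 4}=0$, the surviving $m^{\A}_3$‑terms are identities and hence killed by strict unitality, and the images of all generators lie in the finitely many low‑dimensional $\Hom$‑spaces of $\Pi'$ listed above, so only finitely many such equations carry a nonzero term.
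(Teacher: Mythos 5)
Your construction of $G$ coincides with the paper's: the same object assignment $X_0\mapsto S_1$, $X_1\mapsto S_2$, $X_2\mapsto P_1\opp P_2$, the same values of $G^1$ on generators, $G^d=0$ for $d\geq 2$, and the same verification of the $d=2$ equation by matching \eqref{compruleeqn} against the $m_2$-table of $\Pi$ and of the four $m_3$-relations landing on $X_0$ and $X_1$ against \eqref{basictri1eqn}. The problem is that your argument stops exactly where it becomes nontrivial: you declare the two relations with output $1_{X_2}$ to be ``the crux'' and then say only that handling them ``calls on the remaining operations in Theorem~\ref{bigcomputationtheorem} and, if needed, on nonzero higher components $G^d$.'' That is a statement of the difficulty, not a resolution of it. Worse, neither escape route you gesture at can work as described. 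The operations $m_r$ of $\Pi'$ with $r>3$ cannot enter the $d=3$ functor equation at all, because the left-hand side of the equation in Definition~\ref{functordef} only involves $m_r(G^{s_r},\ldots,G^{s_1})$ with $s_1+\cdots+s_r=3$, forcing $r\leq 3$. And the possible $G^2$-corrections are pinned down by degree and by the block decomposition $\End_{\Pi'}(P)=\bigoplus_{i,j}\Hom(P_i,P_j)$: one has $G^2(u_{1,2},u_{0,1})\in\Hom^0(S_1,P)=k\,j_2$ and $G^2(u_{0,1},u_{2,0})\in\Hom^0(P,S_2)=k\,p_2$, so the correction terms $m_2(G^2(u_{1,2},u_{0,1}),p_1)$ and $m_2(j_1,G^2(u_{0,1},u_{2,0}))$ lie in the off-diagonal blocks $\Hom(P_1,P_2)$ and $\Hom(P_2,P_1)$ and can never supply the diagonal idempotent $1_{P_2}$ that would be needed to promote $m_3(j_1,\b,p_1)=1_{P_1}$ to $1_P=1_{P_1}+1_{P_2}$.

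To be fair, the subtlety you isolated is genuine and worth raising: the paper's own proof translates \eqref{pantstri1eqn} into \eqref{basictri1eqn} term by term, under which the two $X_2$-relations land on $1_{P_1}$ and $1_{P_2}$ rather than on $1_{P}=1_{P_1}+1_{P_2}$, and the text does not comment on the discrepancy, whereas strict unitality of an $\Ainf$-functor forces $G^1(1_{X_2})=1_P$. So you have correctly located the one point of the argument that requires real care (or a correction to the setup of $\A$ or of $\Pi'$). But as written your proposal neither closes this gap nor shows that it can be closed, so it does not establish the proposition.
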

\begin{proof}
  The $\Ainf$-functor $G = \{ G^d \}$ is defined by mapping the lefthand side of the diagram above to the righthand side of the diagram. In more detail, define $G : \Ob(\A)\to\Ob(\Pi')$ by setting $G(X_0) := S_1$, $G(X_1) := S_2$ and $G(X_2) := P$. The map $G^1 : \Hom_A(X_i,X_j) \to \Hom_{\Pi'}(G(X_i), G(X_j))$ is determined by setting
\begin{equation}\label{f1def}
\begin{array}{cc}
  G^1(u_{0,1})   := \b & G^1(v_{1,0}) := \a\\
    G^1(u_{1,2}) := j_1 & G^1(v_{2,1}) := p_2\\
    G^1(u_{2,0}) := p_1 &G^1(v_{0,2}) := j_2
\end{array}
\end{equation}
and the observation that Eqn. \eqref{compruleeqn} implies that the maps $u_{i,i+1}$ and $v_{i,i-1}$ for $i\in \ZZ/3$ generate $A$.

Since the higher maps $G^d := 0$ vanish for $d \geq 2$ then we need only check Eqn. \eqref{simpleainf}. The only non-trivial $\Ainf$-operations $m_n$ for $n>2$ in $A$ are determined by Eqn. \eqref{pantstri1eqn}, however Eqn. \eqref{f1def} can be used to translate back and forth between Eqn. \eqref{pantstri1eqn} and Eqn. \eqref{basictri1eqn}. This shows that $G$ satisfies Eqn. \eqref{simpleainf} for all of the $\Ainf$-operations $m_n$ for $n \geq 3$. 

When $n=2$, Eqn. \eqref{simpleainf} means that $G^1$ is a homomorphism. The only relation in $\A$ is $x_i y_i = 0$ in $End(X_i)$ for $i=0,1,2$. Eqn. \eqref{compruleeqn}, shows that
$$x_i = u_{i-1,i}v_{i,i-1} \conj{ and } y_i = v_{i+1,i} u_{i,i+1}$$
in $\A$. Combining this with Eqn. \eqref{f1def} gives
\begin{equation*}
  \begin{array}{ll}
x_0 = u_{2,0}v_{0,2} \mapsto p_1j_2 = 0 & y_0 = v_{1,0}u_{0,1} \mapsto \a\b = u_1\\
x_1 = u_{0,1}v_{1,0} \mapsto \b\a = u_2 & y_1 = v_{2,1}u_{1,2} \mapsto p_2j_1 = 0\\
x_2 = u_{1,2}v_{2,1} \mapsto j_1p_2 = (21) & y_2 = v_{0,2}u_{2,0} \mapsto j_2p_1 = (12).\\
  \end{array}  
  \end{equation*}
Finally, using this calculation we check that $x_iy_i = y_i x_i = 0$.
\begin{equation*}
  \begin{array}{ll}
x_0y_0 \mapsto 0 u_1 = 0 & y_0x_0 \mapsto u_1 0 = 0\\
x_1y_1 \mapsto u_2 0 = 0 & y_1x_1 \mapsto 0 u_2 = 0\\
x_2y_2 \mapsto (21)(12) = 0 & y_2x_2 \mapsto (12)(21) = 0.\\
  \end{array}  
  \end{equation*}
\end{proof}

\bibliographystyle{amsalpha}

\end{document}